\providecommand{\keywords}[1]{\textbf{Keywords.} #1}
\providecommand{\MSC}[1]{\textbf{2010 Mathematics Subject Classification.} #1}
\newtheorem{theorem}{Theorem}[section]
\newtheorem{lemma}[theorem]{Lemma}
\newtheorem{proposition}[theorem]{Proposition}
\theoremstyle{definition}
\newtheorem{definition}[theorem]{Definition}
\newtheorem{remark}[theorem]{Remark}
\renewcommand\epsilon{\varepsilon}
\newcommand{\R}{\field{R}\xspace}
\newcommand{\N}{\field{N}\xspace}
\newcommand{\field}[1]{\ensuremath{\mathbb{#1}}}
\newcommand{\ens}[1]{ \left\{#1\right\} }
\newcommand\diag{\mathrm{diag} \,}
\newcommand{\Tinf}{T_{\mathrm{inf}}}
\newcommand\TCN{T_{\scriptscriptstyle \mathrm{[CN]} }}
\newcommand\pt[1]{\frac{\partial #1}{\partial t}}
\newcommand\px[1]{\frac{\partial #1}{\partial x}}
\newcommand\pxi[1]{\frac{\partial #1}{\partial \xi}}
\newcommand\Tau{\mathcal{T}}
\newcommand\ssin{s^{\mathrm{in}}}
\newcommand{\rank}{\mathrm{rank} \,}
\newcommand\Id{\mathrm{Id}}
\newcommand\abs[1]{\left|#1\right|}
\newcommand\st{\quad \middle| \quad}
\newcommand\ddx{\frac{d}{dx}}
\newcommand\setCN{\mathcal{B}}
\newcommand\dds{\frac{d}{ds}}
\newcommand{\syst}[2]{
\ifthenelse{\equal{#2}{}}{\left(\Lambda,#1,Q\right)}
{\ifthenelse{\equal{#2}{b}}{\left(\Lambda,-,Q,#1\right)}{}}
{\ifthenelse{\equal{#2}{c}}{\left(\Lambda,-,Q^0,#1\right)}{}}
}
\newcommand\clos[1]{\overline{#1}}
\newcommand\tr{{\mathsf{T}}}
\newcommand\nmin{n^*}
\newcommand\opD{\mathcal{D}}
\title{A method to determine the minimal null control time of 1D linear hyperbolic balance laws}
\author{
Long Hu\thanks{School of Mathematics, Shandong University, Jinan, Shandong 250100, China.  E-mail: \texttt{hul@sdu.edu.cn}}
\and
Guillaume Olive\thanks{Faculty of Mathematics and Computer Science, Jagiellonian University, ul. {\L}ojasiewicza 6, 30-348 Krak\'{o}w, Poland. E-mail: \texttt{math.golive@gmail.com} or \texttt{guillaume.olive@uj.edu.pl}}
}
\date{\today}
\begin{document}

\maketitle

\begin{abstract}
In this paper we introduce a method to find the minimal control time for the null controllability of 1D first-order linear hyperbolic systems by one-sided boundary controls when the coefficients are regular enough.
\end{abstract}

\keywords{Hyperbolic systems; Minimal control time; Null controllability}

\vspace{0.2cm}
\MSC{35L40; 93B05}

\tableofcontents

\section{Introduction}

\subsection{Problem description}

In this paper, we are interested in the null controllability of a class of one-dimensional (1D) first-order linear hyperbolic systems (see e.g. \cite{BC16} for applications).
The equations of the system are
\begin{subequations}\label{syst}
\begin{equation}\label{syst:equ}
\pt{y}(t,x)+\Lambda(x) \px{y}(t,x)=M(x) y(t,x).
\end{equation}
Above, $t \in (0,T)$ is the time variable, $T>0$, $x \in (0,1)$ is the space variable and the state is $y:(0,T) \times (0,1) \to \R^n$ $(n \geq 2$).
The $n \times n$ matrix $\Lambda$ will always be assumed diagonal $\Lambda =\diag(\lambda_1,\ldots,\lambda_n)$, with $m \geq 1$ negative speeds and $p \geq 1$ positive speeds ($m+p=n$) such that:
\begin{equation}\label{hyp speeds}
\lambda_1(x)<\cdots<\lambda_m(x) <0<\lambda_{m+1}(x)<\cdots<\lambda_{m+p}(x),
\end{equation}
for every $x \in [0,1]$.
The $n \times n$ matrix $M$ couples the equations of the system inside the domain and will be called the internal coupling matrix.
We will assume the following regularity:
$$\Lambda, M \in C^{r+1}([0,1])^{n \times n} \quad \text{ for some } r \in \N.$$

The system will be evolving forward in time, so we consider an initial condition at time $t=0$:
\begin{equation}\label{syst:IC}
y(0,x)=y^0(x).
\end{equation}

Let us now discuss the boundary conditions.
The structure of $\Lambda$ induces a natural splitting of the state into components corresponding to negative and positive speeds, denoted respectively by $y_-$ and $y_+$.
For the above system to be well-posed in $(0,T) \times (0,1)$, we then need to add boundary conditions at $x=1$ for $y_-$ and at $x=0$ for $y_+$.
We will consider the following type of boundary conditions:
\begin{equation}\label{syst:BC}
y_-(t,1)=u(t), \quad y_+(t,0)=Qy_-(t,0).
\end{equation}
The function $u:(0,T) \to \R^m$ is called the control, it will be at our disposal.
It only acts on one part of the boundary and, on the other part of the boundary, the equations are coupled by the matrix $Q \in \R^{p \times m}$.
This matrix will be called the boundary coupling matrix.
\end{subequations}

In what follows, \eqref{syst:equ}, \eqref{syst:IC} and \eqref{syst:BC} will be referred to as system \eqref{syst}.
It is well-posed in several functional settings.
In this paper, we will work in $L^2$ for the state and the controls.
For every $T>0$, $y^0 \in L^2(0,1)^n$ and $u \in L^2(0,T)^m$, there exists a unique solution
$$
y \in C^0([0,T];L^2(0,1)^n) \cap C^0([0,1];L^2(0,T)^n).
$$
By solution we mean ``solution along the characteristics''.
We refer for instance to \cite{CHOS21} for a proof in such a setting (see also \cite[Appendix A]{BC16} when $u=0$).

The regularity $C^0([0,T];L^2(0,1)^n)$ of the solution allows us to consider control problems in the state space $L^2(0,1)^n$:

\begin{definition}
Let $T>0$ be fixed.
We say that system \eqref{syst} is null controllable in time $T$ if, for every $y^0 \in L^2(0,1)^n$, there exists $u \in L^2(0,T)^m$ such that the corresponding solution $y$ to system \eqref{syst} satisfies $y(T,\cdot)=0$.
\end{definition}

Since controllability in time $T_1$ implies controllability in any time $T_2 \geq T_1$, it is natural to try to find the smallest possible control time, the so-called ``minimal control time''.

\begin{definition}
We denote by $\Tinf \in [0,+\infty]$ the minimal null control time of system \eqref{syst}, that is
$$
\Tinf=
\inf\ens{T>0 \st \text{System \eqref{syst} is null controllable in time $T$}}.
$$
\end{definition}

The time $\Tinf$ is named ``minimal'' null control time according to the current literature, despite it is not always a minimal element of the set.
We keep this naming here, but we use the notation with the ``inf'' to avoid eventual confusions.
The goal of this article is to introduce a method to find $\Tinf$ for a large class of parameters $M,Q$.

In order to state the results of the literature, we need to introduce the following times:
$$
T_i=\int_0^1 \frac{1}{\abs{\lambda_i(\xi)}} \, d\xi, \quad 1 \leq i \leq n.
$$
The time $T_i$ is the minimal control time for single equation (the transport equation) with speed $\lambda_i$.
Note that the assumption \eqref{hyp speeds} implies in particular the following order relation:
$$
T_1 \leq \cdots \leq T_m \quad \text{ and } \quad
T_{m+1} \geq \cdots \geq T_{m+p}.
$$

Finally, we recall the key notion introduced in \cite[Section 1.2]{HO22-JDE} of ``canonical form'' for boundary coupling matrices.

\begin{definition}\label{def can form}
We say that a matrix $Q^0 \in \R^{p \times m}$ is in canonical form if it has at most one nonzero entry on each row and each column, and this entry is equal to $1$.
We denote by $(r_1,c_1), \ldots, (r_\rho,c_\rho)$ the positions of the corresponding nonzero entries, with $r_1<\ldots<r_\rho$.
\end{definition}

We can prove that, for every $Q \in \R^{p \times m}$, there exists a unique $Q^0 \in \R^{p \times m}$ in canonical form such that $LQU=Q^0$ for some lower triangular matrix $L \in \R^{p \times p}$ with diagonal entries all equal to one and some invertible upper triangular matrix $U \in \R^{m \times m}$.
The matrix $Q^0$ is called the canonical form of $Q$ and we can extend the definition of $(r_1,c_1), \ldots, (r_\rho,c_\rho)$ to any nonzero matrix.
We refer to the above reference for more details.

\subsection{Literature}\label{sect literature}

The controllability of 1D first-order hyperbolic systems with boundary controls has been widely studied in the literature.
We only recall here some important results regarding the control time of such systems, since it is the primary focus of the present paper.
All the results quoted below are in fact valid for matrices $\Lambda$ and $M$ with $C^{0,1}$ and $L^\infty$ regularity only, respectively.

\begin{itemize}
\item
It was first proved in the celebrated survey \cite{Rus78} that system \eqref{syst} is null controllable in time $T_{m+1}+T_m$.
A strength of this result is that it is valid for any $M$ and $Q$.
However, it was also observed in that paper that the minimal control time can be smaller than $T_{m+1}+T_m$.
Finding the minimal control time even in the simpler case $M=0$ was then left as an open problem.

\item
For $M=0$, the minimal null control time was eventually found in \cite{Wec82}.
The author gave an explicit formula of this time in terms of some indices related to $Q$.
A simpler formula is however now known from the work \cite{HO22-JDE} (see Section 5.3 therein):
$$M=0 \quad \Longrightarrow \quad \Tinf=\max_{1 \leq k \leq \rho} \ens{T_{m+r_k}+T_{c_k}, \quad T_m, \quad T_{m+1}},$$
where the indices $(r_k,c_k)$ refer to $Q$.

\end{itemize}

After the seminal work \cite{Rus78}, it seems that the community turned its attention to quasilinear systems in the $C^1$ framework of so-called semi-global solutions (see e.g. \cite{LR03,Li10,Hu15}).
However, lately there has been a resurgence on finding the minimal control time for linear systems when $M \neq 0$.
This was initiated by the authors in \cite{CN19} and followed by a series of works \cite{HO21-JMPA,CN21,HO21-COCV,CN21-pre,HO22-JDE,HO23-pre}.

\begin{itemize}
\item
In \cite{CN19,CN21}, the authors introduced the following class of boundary coupling matrices:
\begin{equation}\label{def setCN}
\setCN=
\ens{Q \in \R^{p \times m} \st \text{$Q(i)$ is invertible for all $i \leq \min\ens{p,m-1}$}},
\end{equation}
where $Q(i)$ denotes the $i \times i$ matrix formed from the first $i$ rows and columns of $Q$ ($\setCN=\R^p$ for $m=1$).
The authors then proved that, if $Q \in \setCN$, then the system \eqref{syst} is null controllable in any time $T>\TCN$, where
$$
\TCN=
\max_{1 \leq k \leq \min\ens{m,p}} \ens{T_{m+k}+T_k, \quad T_m}.
$$
This result is valid for any $M$.
Another important feature of this result is that it is stable by small perturbations of $Q$.
Note however the time $\TCN$ can be strictly larger than the minimal null control time: consider for instance $M=0$ and $Q=\begin{pmatrix} 1 & 0 \\ 0 & 0 \end{pmatrix} \in \setCN$, with $\Lambda$ such that $T_4+T_2>T_3+T_1$, for which we have $\TCN=T_4+T_2>\Tinf=\max\ens{T_3+T_1,T_2}$.

\item
For full row rank boundary coupling matrices ($\rank Q=p$), we found the minimal null control time in \cite{HO21-JMPA}.
In this case, we showed that this time is the same as for the system without internal coupling ($M=0$).
More precisely, we proved the formula
$$
\Tinf=
\max_{1 \leq k \leq p} \ens{T_{m+k}+T_{c_k}, \quad T_m}
\quad \text{ if } \rank Q=p,
$$
where the indices $c_k$ refer to $Q$.

%
%
%

\item
For systems of $n=2$ equations, the minimal null control time was found in \cite{CVKB13} for $Q \neq 0$ and in \cite{HO21-COCV} for $Q=0$.
Notably, we showed in the second situation that this time can potentially be any number between $\max\ens{T_1,T_2}$ and $T_2+T_1$, depending on the values of $M$.

\item
In \cite{HO22-JDE}, we found the smallest and largest values that the minimal null control time can take with respect to the internal coupling matrix $M$.
More precisely, we showed therein that $\Tinf$ is never smaller than the minimal null control time of the system without internal coupling, and that
\begin{equation}\label{max Tinf}
\max_M \, \Tinf=
\max_{1 \leq k \leq \rho_0} \ens{
T_{m+k}+T_{c_k}, \quad T_{m+\rho_0+1}+T_m
},
\end{equation}
where $\rho_0$ is the largest integer $i \leq p$ such that the $i \times m$ matrix formed from the first $i$ rows of $Q$ has rank $i$ ($\max_M \, \Tinf=T_{m+1}+T_m$ if the first row of $Q$ is zero).
As explained in \cite[Remark 1.17]{HO22-JDE}, these results generalize all the aforementioned results, apart from the case $n=2$.

\item
Finally, in \cite{HO23-pre}, we characterized the minimal null control time in the case $m=1$ for a large class of constant internal coupling matrices $M$ (e.g. constant matrices with zero diagonal entries).
\end{itemize}


Despite these numerous results, there are still situations where the minimal null control time cannot be determined (e.g. when the first row of $Q$ is zero).
The goal of this article is to introduce a method to find this time in some new situations.

\paragraph{Organization of the paper.}
The rest of this article is organized as follows.
Section \ref{sect prelim} is a preparatory section where we recall some results and prove an important lemma.
In Section \ref{sect method}, we present our method to compute the minimal null control time of systems related to system \eqref{syst}.
In Section \ref{sect derivatives GM}, we explain how to use this method to also find the minimal null control time of system \eqref{syst}.
We then conclude the article with a summary of the method and several applications.

\section{Preliminary results}\label{sect prelim}

The presentation of our method requires some preparatory steps.
For the rest of this article, it is convenient to extend $\Lambda$ to a $C^{r+1}$ function of $\R$, still denoted by the same, so that the characteristics are globally defined.
Clearly, this can be done respecting the order relation \eqref{hyp speeds}.
Since all the results of the paper only depend on the values of $\Lambda$ in $[0,1]$, they do not depend on the chosen extension.

\subsection{Equivalent system by the backstepping method}\label{sect syst G}

First of all, it will be convenient to use the notion of ``equivalent systems'' introduced in \cite{HO22-JDE} (see also \cite{HO21-JMPA}).
Following \cite[Definition 1.18]{HO22-JDE}, we will say that two systems are equivalent if we can map the solutions of one system onto those of the other system by means of an invertible bounded linear transformation of the state space.
Clearly, two equivalent systems have the same minimal null control time.

The first step is to use the so-called backstepping method for PDEs (see e.g. \cite{KS08}).
The goal of the backstepping method is to obtain an equivalent system with a simpler coupling structure.
This method was developed specifically for hyperbolic systems in a series of works \cite{CVKB13,DMVK13,HDM15,HDMVK16,HVDMK19,HO23-pre}.
It was also the starting point in the investigation of the null controllability properties of system \eqref{syst} in the recent works \cite{CN19,CN21,HO21-COCV,HO22-JDE,HO23-pre}.

For this method to be used, we first need to remove the diagonal entries of $M$ (this will be clear from what follows).
This can easily be done by using the transformation
$$\hat{y}(t,x)=D(x)y(t,x),$$
with $D(x)=\diag(e^{-\int_0^x \frac{m_{kk}(\xi)}{\lambda_k(\xi)} \,d\xi})_{1 \leq k \leq n}$.
Then, $\hat{y}$ satisfies the same boundary condition at $x=0$ as $y$ and the same equations as $y$ but with $M$ replaced by
\begin{equation}\label{def M0}
M^0(x)=\left(D(x)M(x) +\Lambda(x)D'(x)\right) D(x)^{-1}.
\end{equation}
Note that $M^0 \in C^{r+1}$ and that the diagonal entries of $M^0$ are all equal to zero.

Now the core of the backstepping method consists in using the Volterra integral transformation of the second kind
$$
\tilde{y}(t,x)=\hat{y}(t,x)-\int_0^x K(x,\xi)\hat{y}(t,\xi) \, d\xi,
$$
where $K:\clos{\Tau} \to \R^{n \times n}$ is a kernel to be determined, defined on the closure of the triangle $\Tau=\ens{(x,\xi) \in \R^2 \st 0<\xi<x<1}$.
It is not difficult to see, at least formally, that $\tilde{y}$ is solution to the system
\begin{equation}\label{syst G}
\begin{dcases}
\pt{\tilde{y}}(t,x)+\Lambda(x) \px{\tilde{y}}(t,x)=G(x) \tilde{y}_-(t,0), \\
\tilde{y}_-(t,1)=\tilde{u}(t), \quad \tilde{y}_+(t,0)=Q\tilde{y}_-(t,0), \\
\tilde{y}(0,x)=\tilde{y}^0(x),
\end{dcases}
\end{equation}
(for some control $\tilde{u}$ and initial data $\tilde{y}^0$), with $G:[0,1] \to \R^{n \times m}$ given by
\begin{equation}\label{def G}
G(x)
=
K(x,0)B, \quad B=-\Lambda(0)\begin{pmatrix} \Id_m \\ Q \end{pmatrix},
\end{equation}
provided that $K$ satisfies the so-called kernel equations:
\begin{equation}\label{kern equ}
\begin{dcases}
\Lambda(x)\px{K}(x,\xi)
+\pxi{K}(x,\xi)\Lambda(\xi)
+K(x,\xi) (\Lambda'(\xi)+M^0(\xi))=0,
\\
\Lambda(x) K(x,x)-K(x,x)\Lambda(x) =M^0(x).
\end{dcases}
\end{equation}
We refer for instance to \cite[Section 2.2]{HVDMK19} for some details.

The technical point in the backstepping method is to prove that there indeed exists a solution to the kernel equations \eqref{kern equ}.
Note that, because of the second condition in  \eqref{kern equ}, these equations have a solution only if the diagonal entries of $M^0$ are equal to zero.
This explains the need to change $M$ to $M^0$ by a preliminary transformation, as discussed before.
It then follows from the results of \cite{HDMVK16} that the kernel equations \eqref{kern equ} have many $C^{r+1}$ solutions in $\clos{\Tau}$ (see also the method in \cite[Appendix B]{HO23-pre}).
Recall that we assume that $\Lambda \in C^{r+1}$, so that the coefficients in \eqref{kern equ} are a priori only in $C^r$ because of the term $\Lambda'(\xi)$, but doing the change of unknown $\hat{K}(x,\xi)=K(x,\xi)\Lambda(\xi)$ we see that $\hat{K}$ satisfies a similar system without involving the derivative of $\Lambda$.
Note that we then have $G \in C^{r+1}([0,1])^{n \times m}$.

As a result, we see that the study of the controllability properties of our initial system \eqref{syst} comes down to the study of the controllability properties of systems of the form \eqref{syst G}.
Let us point out that the choice of solution to the kernel equations does not affect the controllability properties because all the corresponding systems \eqref{syst G}-\eqref{def G} are equivalent.
Now, two problems naturally arise:
\begin{enumerate}[(I)]
\item\label{prob 1}
Can we characterize the minimal null control time of systems of the form \eqref{syst G} in function of $Q$ and $G$ ?

\item\label{prob 2}
If so, can we deduce the minimal null control time of system \eqref{syst} in function of $Q$ and $M$, when $G$ takes the particular form \eqref{def G}-\eqref{kern equ} ?
\end{enumerate}

Partial answers can be found in the references mentioned in the literature section (Section \ref{sect literature}).
The goal of the present article is to introduce a general method that is able to tackle both problems for a large class of $G$ and $M$.

\subsection{Simplification of the coupling structure}\label{sect previous results}

Without further information on the structure of $G$, it is still difficult to characterize the minimal null control time of systems of the form \eqref{syst G}.
A lot of work has been done in \cite{HO22-JDE} to simplify the coupling structure of these systems in order to find this minimal time.
We collect some of the results of this paper in this section.

Below, we denote by $G_{--}$ (resp. $G_{+-}$) the submatrix formed by the first $m$ (resp. last $p$) rows of $G$.

The first result that we need is the following (\cite[Theorem 7.1]{HO22-JDE}):

\begin{theorem}
The minimal null control time of system \eqref{syst G} does not depend on $G_{--}$.
\end{theorem}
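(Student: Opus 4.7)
The plan is to establish an equivalence (in the sense of \cite[Definition 1.18]{HO22-JDE}) between system \eqref{syst G} with coupling $G = \begin{pmatrix} G_{--} \\ G_{+-} \end{pmatrix}$ and the same system with $G$ replaced by $G' = \begin{pmatrix} 0 \\ G_{+-} \end{pmatrix}$; this will imply equality of the two minimal null control times.

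Following the backstepping philosophy of Section~\ref{sect syst G}, I would construct this equivalence by a Volterra transformation of the second kind acting only on the negative-speed components:
\[
z_-(t,x) = y_-(t,x) - \int_0^x K(x,\xi)\, y_-(t,\xi) \, d\xi, \qquad z_+(t,x) = y_+(t,x),
\]
where $K : \overline{\Tau} \to \R^{m \times m}$ is to be determined. Since the integral vanishes at $x=0$, the identity $z_-(t,0) = y_-(t,0)$ holds automatically, so the boundary relation $z_+(t,0) = Q z_-(t,0)$ is inherited from $y$ and the equation for $z_+$ reads $\pt{z_+} + \Lambda_+ \px{z_+} = G_{+-}(x)\, z_-(t,0)$ without any assumption on $K$. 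A routine computation based on integration by parts in $\xi$ then shows that $\pt{z_-} + \Lambda_- \px{z_-} = 0$ as soon as $K$ solves the kernel system
\[
\begin{dcases}
\Lambda_-(x)\, \partial_x K(x,\xi) + \partial_\xi \bigl(K(x,\xi)\Lambda_-(\xi)\bigr) = 0 & \text{in } \Tau, \\
K(x,x)\Lambda_-(x) - \Lambda_-(x)K(x,x) = 0 & x \in [0,1], \\
K(x,0)\Lambda_-(0) + \int_0^x K(x,\xi)\, G_{--}(\xi)\, d\xi = G_{--}(x) & x \in [0,1].
\end{dcases}
\]

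The crux of the argument is the solvability of these kernel equations. The strict hyperbolicity assumption \eqref{hyp speeds} forces $K(x,x)$ to be diagonal, and the third identity is a Volterra-type integral relation determining the trace $K(\cdot,0)$ from $G_{--}$ and the interior values of $K$. Combined with a characteristic analysis of the first-order transport PDE in $\Tau$, one reduces the whole problem to a fixed-point equation whose solution yields a kernel $K \in C^{r+1}(\overline{\Tau})^{m \times m}$, by arguments analogous to those of \cite{HDMVK16} and of the construction recalled in Section~\ref{sect syst G}.

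Once $K$ is available, the map $y \mapsto z$ is bounded and invertible on $L^2(0,1)^n$ by the classical Volterra inverse theorem, and the controls are in bijective correspondence via $v(t) = z_-(t,1) = u(t) - \int_0^1 K(1,\xi)\, y_-(t,\xi)\, d\xi$, which gives the desired equivalence between the two systems. I expect the main obstacle to be precisely the simultaneous solvability of the compatibility condition and the transport kernel PDE: because $G_{--}$ enters non-locally through an integral of $K$ over each slice $\xi \in (0,x)$, these kernel equations differ from the standard backstepping kernels of Section~\ref{sect syst G}, but their Volterra structure should still permit a standard fixed-point argument.
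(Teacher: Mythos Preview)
The paper does not supply a proof: the statement is quoted from \cite[Theorem 7.1]{HO22-JDE} in Section~\ref{sect previous results} and used as a black box. So there is no in-paper argument to compare your proposal against; one can only judge it on its own merits.

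There is a genuine gap at the solvability of the kernel system. Already at the origin your second and third conditions clash: the commutator forces $K(0,0)$ to be diagonal, while the trace identity at $x=0$ gives $K(0,0)\Lambda_-(0)=G_{--}(0)$, so every off-diagonal entry of $G_{--}(0)$ would have to vanish. More structurally, consider an entry $k_{ij}$ with $i>j$, so that the characteristic slope $\lambda_j(\xi)/\lambda_i(x)$ exceeds $1$. The characteristics issued from the diagonal $\xi=x$ then enter $\Tau$ and reach the axis $\xi=0$; since your transport PDE for $k_{ij}$ is homogeneous and decoupled, the diagonal datum $k_{ij}(x,x)=0$ forces $k_{ij}\equiv 0$ on the whole sub-triangle they sweep, in particular on a non-trivial interval of $\{\xi=0\}$. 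Your trace identity on that interval then becomes an extra constraint on $G_{--}$ rather than a determination of $k_{ij}(\cdot,0)$, and it is generically violated. The analogy with \cite{HDMVK16} fails precisely here: in that reference the kernel PDE is coupled through $M^0$ and the diagonal carries the non-zero datum $m^0_{\alpha\beta}/(\lambda_\alpha-\lambda_\beta)$, which is what makes the two boundary pieces compatible; in your decoupled, homogeneous setting there is no such mechanism. A workable route is to argue on the trace $y_-(\cdot,0)$ directly (the map $u\mapsto y_-(\cdot,0)$ is an invertible Volterra-in-time operator regardless of $G_{--}$), rather than through a spatial backstepping on $y_-$.
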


Consequently, we only have to focus on systems of the form \eqref{syst G} with $G_{--}=0$.
In the sequel such systems will be denoted by
$$(Q,G_{+-}).$$
We will denote the corresponding minimal null control time by $\Tinf(Q,G_{+-})$, or simply use $\Tinf$ when there is no ambiguity.
Finally, we will denote by $Q_j$ (resp. $G_j$) the $j$-th row of $Q$ (resp. $G_{+-}$).

The second result we need is the following precised statement of \cite[Proposition 4.1]{HO22-JDE} (which is contained in its proof):

\begin{lemma}\label{lem reduc Q}
For any invertible lower triangular matrix $L \in \R^{p \times p}$, the system $(Q,G_{+-})$ is equivalent to the system $(LQ,\tilde{G}_{+-})$, where $\tilde{G}_{+-}(x)$ is the matrix whose first row is $\tilde{G}_1(x)=l_{11}G_1(x)$ and whose $j$-th row for $j \geq 2$ is
$$\tilde{G}_j(x)=l_{jj} G_j(x)+ L_j' G^{\zeta}_{1:j-1}(x),$$
where $L_j'=\begin{pmatrix} l_{j1} & \cdots & l_{j,j-1} \end{pmatrix}$,
\begin{equation}\label{def Gzeta}
G^{\zeta}_{1:j-1}(x)=
\begin{pmatrix}
G_1(\zeta_{j1}(x)) \\
G_2(\zeta_{j2}(x)) \\
\vdots \\
G_{j-1}(\zeta_{j,j-1}(x))
\end{pmatrix},
\end{equation}
and $\zeta_{jk}$ is the solution to
$$
\begin{dcases}
\zeta_{jk}'(x)= \frac{\lambda_{m+k}(\zeta_{jk}(x))}{\lambda_{m+j}(x)}, \quad \forall x \in \R, \\
\zeta_{jk}(0)=0.
\end{dcases}
$$

\end{lemma}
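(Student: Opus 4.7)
The plan is to exhibit an explicit bounded invertible linear map $y \mapsto \tilde{y}$ on the state space that carries solutions of $(Q, G_{+-})$ to solutions of $(LQ, \tilde{G}_{+-})$, preserving the control $u$. Motivated by the form of the target right-hand side $\tilde{G}_j$, I would set
\begin{equation*}
\tilde{y}_-(t,x) = y_-(t,x), \qquad \tilde{y}_{m+j}(t,x) = l_{jj}\, y_{m+j}(t,x) + \sum_{k=1}^{j-1} l_{jk}\, y_{m+k}\!\left(t,\zeta_{jk}(x)\right), \quad 1 \leq j \leq p.
\end{equation*}
The choice is dictated by the defining ODE of $\zeta_{jk}$, which rewrites as $\lambda_{m+j}(x)\,\zeta_{jk}'(x) = \lambda_{m+k}(\zeta_{jk}(x))$; this is exactly the compatibility needed for $x \mapsto y_{m+k}(t,\zeta_{jk}(x))$ to transport at speed $\lambda_{m+j}$.

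The verification is then a direct computation. For the interior equation, applying the chain rule to the definition of $\tilde{y}_{m+j}$ and using the identity above yields
\begin{equation*}
\pt{\tilde{y}_{m+j}} + \lambda_{m+j}(x)\px{\tilde{y}_{m+j}} = l_{jj}\left[\pt{y_{m+j}} + \lambda_{m+j}(x)\px{y_{m+j}}\right]_{(t,x)} + \sum_{k<j} l_{jk}\left[\pt{y_{m+k}} + \lambda_{m+k}\px{y_{m+k}}\right]_{(t,\zeta_{jk}(x))},
\end{equation*}
so each bracket reduces, by the $(m+j)$-th and $(m+k)$-th equations of the original system, to $G_j(x)y_-(t,0)$ and $G_k(\zeta_{jk}(x))y_-(t,0)$ respectively, which reassemble into $\tilde{G}_j(x)\tilde{y}_-(t,0)$. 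At $x=0$, the relations $\zeta_{jk}(0)=0$ and $y_+(t,0) = Qy_-(t,0)$ combined with the lower-triangularity of $L$ give $\tilde{y}_{m+j}(t,0) = \sum_{k\leq j} l_{jk}(Qy_-)_k(t,0) = (LQ\tilde{y}_-)_j(t,0)$, which is exactly the required boundary coupling, while at $x=1$ the boundary datum of $\tilde{y}_-$ is exactly $u$. Invertibility on $L^2(0,1)^n$ follows from the fact that the map is lower triangular in $j$ with nonzero diagonal entries $l_{jj}$, so one inverts row by row, each off-diagonal contribution being a composition with a $C^{r+1}$ diffeomorphism $\zeta_{jk}$ which is bounded and bounded below on $L^2$.

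The main technical obstacle is that $\zeta_{jk}(x)$ can fall outside $[0,1]$: since $k<j$ forces $\lambda_{m+k}(0)>\lambda_{m+j}(0)$, one has $\zeta_{jk}'(0)>1$, so $\zeta_{jk}(x)>x$ and in particular $\zeta_{jk}(1)>1$. To give meaning to $y_{m+k}(t,\zeta_{jk}(x))$ and $G_k(\zeta_{jk}(x))$ one relies on the extension of $\Lambda$ to $\R$ fixed at the beginning of Section \ref{sect prelim}: the characteristics are then globally defined, $y_{m+k}$ is extended past $x=1$ by integration along them, and $G$ admits a natural extension in the backstepping framework of Section \ref{sect syst G} where it is read off a kernel that can itself be extended. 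Once this extension is in place, every step of the verification above becomes a routine check.
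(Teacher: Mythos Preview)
The paper does not actually prove this lemma here; it is quoted verbatim as a sharpened restatement of \cite[Proposition~4.1]{HO22-JDE}, with the proof left in that reference. Your explicit state-space transformation
\[
\tilde{y}_-(t,x)=y_-(t,x),\qquad
\tilde{y}_{m+j}(t,x)=l_{jj}\,y_{m+j}(t,x)+\sum_{k<j} l_{jk}\,y_{m+k}\bigl(t,\zeta_{jk}(x)\bigr)
\]
is the natural construction, and your verification of the interior equation (via $\lambda_{m+j}(x)\zeta_{jk}'(x)=\lambda_{m+k}(\zeta_{jk}(x))$) and of the boundary condition at $x=0$ is correct.

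Your final paragraph, however, rests on a sign error. Hypothesis~\eqref{hyp speeds} orders the positive speeds as $\lambda_{m+1}<\cdots<\lambda_{m+p}$, so for $k<j$ one has $\lambda_{m+k}(0)<\lambda_{m+j}(0)$ and therefore $\zeta_{jk}'(0)<1$, not $>1$. A standard comparison with the identity then gives $\zeta_{jk}(x)<x$ for all $x\in(0,1]$: if $x_0>0$ were the first point with $\zeta_{jk}(x_0)=x_0$, the derivative there would satisfy $\zeta_{jk}'(x_0)=\lambda_{m+k}(x_0)/\lambda_{m+j}(x_0)<1$, contradicting the fact that $\zeta_{jk}-\mathrm{id}$ reaches $0$ from below at $x_0$. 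Hence $\zeta_{jk}$ maps $[0,1]$ into $[0,1)$, the composition $f\mapsto f\circ\zeta_{jk}$ is bounded on $L^2(0,1)$ (with constant controlled by $\inf\zeta_{jk}'>0$), and your map is a genuine bounded invertible operator on $L^2(0,1)^n$ with no extension needed. The ``main technical obstacle'' you flag does not exist; delete that paragraph and the argument is complete.
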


The last result that we need is a consequence of \cite[Corollary 1.12]{HO22-JDE} (see also \cite[Remark 1.17]{HO22-JDE}):

\begin{theorem}\label{thm comp time}
Let $\nmin=\min\ens{p,m}$.
Assume that the first $\nmin$ rows of $Q$ are linearly independent.
Then, $\Tinf(Q,G_{+-})$ does not depend on $G_{+-}$, and it is given by
$$\Tinf(Q,G_{+-})=\max_{1 \leq k \leq \nmin} \ens{T_{m+k}+T_{c_k}, \quad T_m},$$
where the indices $c_k$ refer to $Q$.
\end{theorem}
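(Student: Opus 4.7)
The plan is to derive this statement from the prior work \cite[Corollary 1.12]{HO22-JDE} by first reducing $Q$ to an "echelon" form via Lemma \ref{lem reduc Q}.

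First, under the linear independence hypothesis, I would show that the canonical form $Q^0$ of $Q$ has pivot positions $(r_k,c_k)$ satisfying $r_k = k$ for all $k = 1, \ldots, \nmin$. Recall that $Q^0 = L Q U$ for some invertible lower triangular $L$ with unit diagonal and some invertible upper triangular $U$. Since $(LQ)_i$ is a linear combination of $Q_1, \ldots, Q_i$, left multiplication by $L$ preserves the row span of every prefix, so the first $\nmin$ rows of $LQ$ remain linearly independent. Right multiplication by $U$ is injective on row vectors, so the first $\nmin$ rows of $Q^0$ are also linearly independent. As each nonzero row of a canonical-form matrix contains exactly one $1$ (Definition \ref{def can form}), this forces each of the first $\nmin$ rows of $Q^0$ to be nonzero, hence $r_k = k$ for $k = 1, \ldots, \nmin$.

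Second, I would use Lemma \ref{lem reduc Q} with an appropriate invertible lower triangular $L$ to pass from the system $(Q, G_{+-})$ to an equivalent system $(LQ, \tilde{G}_{+-})$, where $LQ$ has its first $\nmin$ rows in row-echelon form with pivots in columns $c_1, \ldots, c_{\nmin}$. The required $L$ exists by the first step (Gaussian elimination using only lower triangular row operations suffices, since each successive row brings a new pivot), and the transformed matrix $\tilde{G}_{+-}$ is given explicitly by the formula of Lemma \ref{lem reduc Q}, involving the characteristic shifts $\zeta_{jk}$.

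Finally, I would invoke \cite[Corollary 1.12]{HO22-JDE}, which treats exactly the situation in which the canonical form of the boundary coupling matrix has pivots in the first $\nmin$ rows. That result asserts that $\Tinf$ is then independent of the right-hand side (and so is independent of $\tilde{G}_{+-}$, and therefore of $G_{+-}$) and is given by the stated formula. The main obstacle is verifying that the reduction output by Lemma \ref{lem reduc Q} genuinely falls within the hypotheses of that corollary, and that the column indices $c_k$ computed from $Q$ agree with those computed from $LQ$; the latter follows because left multiplication by a lower triangular matrix never disturbs the leftmost nonzero entry of a row-independent prefix, while the former is precisely the purpose for which Lemma \ref{lem reduc Q} was formulated.
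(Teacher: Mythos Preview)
Your proposal is correct and follows the paper's approach: the paper does not give a separate proof but states the theorem as a direct consequence of \cite[Corollary 1.12]{HO22-JDE} (with \cite[Remark 1.17]{HO22-JDE}), and your Step~1 makes explicit exactly why the linear-independence hypothesis forces $r_k=k$ for $1\le k\le \nmin$, placing the system within the scope of that corollary. Your Step~2 (the reduction via Lemma~\ref{lem reduc Q}) is superfluous---the canonical-form indices $c_k$ are defined for any $Q$ and the cited corollary applies directly once $r_k=k$ is known, without first row-reducing---but it does no harm.
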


\subsection{Derivative of zero rows}

The last ingredient that will be instrumental in our method is the following.

\begin{lemma}\label{lem derivative}
Assume that $Q_{j_0}=0$ and $G_{j_0} \in C^1([0,1])^{1 \times m}$ for some $j_0 \in \ens{1,\ldots,p}$.
Then, for any $T>0$, the system $(Q,G_{+-})$ is null controllable in time $T$ if, and only if, so is the system $(\hat{Q},\hat{G}_{+-})$, defined by the same $j$-th row as $(Q,G_{+-})$ for $j \neq j_0$ (i.e. $\hat{Q}_j=Q_j$ and $\hat{G}_j=G_j$ for $j \neq j_0$) and
$$
\hat{Q}_{j_0}=G_{j_0}(0), \quad \hat{G}_{j_0}(x)=(\opD_{j_0} G_{j_0})(x),
$$
where $\opD_{j_0}$ is the differential operator $\lambda_{m+j_0} \ddx$.

\end{lemma}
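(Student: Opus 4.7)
The strategy is to introduce a differential/integral correspondence between the solutions of the two systems that leaves the control unchanged. Concretely, given a (sufficiently regular) solution $\tilde y$ of $(Q, G_{+-})$, I would set $\hat y_j = \tilde y_j$ for $j \neq (+, j_0)$ and
$$\hat y_{+, j_0}(t, x) := (\opD_{j_0} \tilde y_{+, j_0})(t, x) = \lambda_{m+j_0}(x)\, \px{\tilde y_{+, j_0}}(t, x),$$
with inverse formula
$$\tilde y_{+, j_0}(t, x) = \int_0^x \frac{\hat y_{+, j_0}(t, \xi)}{\lambda_{m+j_0}(\xi)}\, d\xi,$$
well-defined because $Q_{j_0} = 0$ forces $\tilde y_{+, j_0}(t, 0) = 0$. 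The controls coincide ($\tilde u = \hat u$), since the transformation leaves the negative-speed components untouched.

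The first step is to verify formally that the correspondence exchanges the two systems. Differentiating the $(+, j_0)$-equation of $(Q, G_{+-})$ in $x$ and multiplying by $\lambda_{m+j_0}(x)$ yields the transport equation for $\hat y_{+, j_0}$ with source $\lambda_{m+j_0}(x) G_{j_0}'(x) \tilde y_-(t, 0) = \hat G_{j_0}(x) \tilde y_-(t, 0)$. For the boundary trace at $x = 0$, the hypothesis $Q_{j_0} = 0$ is crucial: since $\tilde y_{+, j_0}(t, 0) \equiv 0$, also $\pt{\tilde y_{+, j_0}}(t, 0) = 0$, so evaluating the PDE at $x = 0$ gives $\hat y_{+, j_0}(t, 0) = \lambda_{m+j_0}(0) \px{\tilde y_{+, j_0}}(t, 0) = G_{j_0}(0) \tilde y_-(t, 0) = \hat Q_{j_0} \tilde y_-(t, 0)$, as required by $(\hat Q, \hat G_{+-})$. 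The opposite direction (reconstructing $\tilde y$ from $\hat y$ by integration) is a symmetric computation using $\hat Q_{j_0} = G_{j_0}(0)$ and $\hat G_{j_0}/\lambda_{m+j_0} = G_{j_0}'$.

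The final step is the transfer of null controllability in time $T$. The implication ``$(Q, G_{+-})$ null controllable $\Rightarrow (\hat Q, \hat G_{+-})$ null controllable'' is clean at the $L^2$-level: given $\hat y^0 \in L^2$, lift it to $\tilde y^0 \in L^2$ (whose $(+, j_0)$-component lies in $H^1$ and vanishes at $x = 0$) via the integration formula, take any $L^2$-control $\tilde u$ driving $\tilde y^0$ to zero, and set $\hat u := \tilde u$. A uniqueness argument for the transport IBVP shows that $\int_0^x \hat y_{+, j_0}(t, \xi)/\lambda_{m+j_0}(\xi)\, d\xi$ coincides with $\tilde y_{+, j_0}$, forcing $\hat y_{+, j_0}(T, \cdot) = 0$. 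The reverse direction is the main technical obstacle, since the direct transformation $\opD_{j_0}$ is unbounded on $L^2$ and an arbitrary $\tilde y^0 \in L^2$ cannot be lifted to an $L^2$-datum for $(\hat Q, \hat G_{+-})$ in a uniform way. I expect to resolve this by formulating null controllability through the observability inequality on the adjoint system, where the bounded, integration-type part of the dual transformation provides the desired transfer on a dense subspace of adjoint states, then extending to all of $L^2$ by density and continuity of the semigroup.
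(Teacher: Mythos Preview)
Your formal state-level correspondence $\hat y_{+,j_0}=\opD_{j_0}\tilde y_{+,j_0}$ is correct and is precisely the heuristic the paper records in the remark following the lemma. The paper's actual proof, however, does not attempt to make this transformation rigorous on $L^2$. Instead it works directly with the solution formula along characteristics: writing $v(s)=\tilde y_-(s,0)$, the condition $\tilde y_{m+j_0}(T,\cdot)=0$ becomes, after the change of variable $x=\chi_{m+j_0}(T;t,0)$, the scalar identity
\[
Q_{j_0}\,v(t)+\int_t^T G_{j_0}\bigl(\chi_{m+j_0}(s;t,0)\bigr)\,v(s)\,ds=0
\quad\text{for a.e. }t\in(\ssin_{m+j_0}(T,1),T).
\]
With $Q_{j_0}=0$ this is a pure integral condition in $t$; differentiating it in $t$ is \emph{reversible} (integrating back recovers the original identity, since both sides vanish at $t=T$) and yields exactly the same form with $(Q_{j_0},G_{j_0})$ replaced by $(\hat Q_{j_0},\hat G_{j_0})$. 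All the other null-controllability conditions involve only $v$ and the unchanged rows, so the equivalence is immediate. No unbounded operator on the state space ever appears.

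This is where your proposal has a genuine gap. You correctly flag the reverse implication as the ``main technical obstacle'' and then offer only a sketch: pass to the adjoint, hope the dual transformation is bounded, argue by density. That is not yet a proof; you would need to write down the two adjoint systems, identify the dual map explicitly, and verify that it intertwines the observability inequalities with the right constants. The paper's route sidesteps the whole difficulty by never leaving the boundary trace $v\in L^2$: the equivalence is between two families of scalar integral identities on $v$, and differentiation/integration of such identities in the parameter $t$ raises no functional-analytic issue.
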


\begin{proof}
We are going to show that the null controllability conditions are the equivalent for the $m+j_0$ components of the states of the two systems.
To this end, let us denote by $\chi_{m+j_0}$ the corresponding characteristic, i.e. the solution $\chi$ to
$$
\begin{dcases}
\dds \chi(s;t,x)= \lambda_{m+j_0}(\chi(s;t,x)), \quad \forall s \in \R, \\
\chi(t;t,x)=x,
\end{dcases}
$$
and let $\ssin_{m+j_0}(t,x)$ be the solution $s$ to $\chi_{m+j_0}(s;t,x)=0$.
We assume that $T \geq T_{m+j_0}$, otherwise neither system is controllable (see \cite[Lemma 3.3]{HO21-COCV}).
Then, the null controllability condition $y_{m+j_0}(T,\cdot)=0$ for the system $(Q,G_{+-})$ is equivalent to
$$
Q_{j_0} v(\ssin_{m+j_0}(T,x))
+\int_{\ssin_{m+j_0}(T,x)}^T  G_{j_0}(\chi_{m+j_0}(s;T,x)) v(s) \, ds
=0,
$$
for a.e. $x \in (0,1)$, where we introduced $v(s)=y_-(s,0)$.
We write $x=\chi_{m+j_0}(T;t,0)$ and obtain, equivalently,
\begin{equation}\label{equiv NC}
Q_{j_0} v(t)
+\int_t^T  G_{j_0}(\chi_{m+j_0}(s;t,0)) v(s) \, ds
=0,
\end{equation}
for a.e. $t \in (\ssin_{m+j_0}(T,1), T)$.
Using the assumption $Q_{j_0}=0$, we obtain the equivalent identity
$$
\int_t^T  G_{j_0}(\chi_{m+j_0}(s;t,0)) v_i(s) \, ds
=0,
$$
for every $t \in (\ssin_{m+j_0}(T,1), T)$.
Taking the derivative with respect to $t$, this is also equivalent to
$$
G_{j_0}(0) v(t)
+\int_t^T  G_{j_0}' (\chi_{m+j_0}(s;t,0))
\lambda_{m+j_0}(\chi_{m+j_0}(s;t,0))
v(s) \, ds
=0,
$$
for a.e. $t \in (\ssin_{m+j_0}(T,1), T)$.
This is the same condition as \eqref{equiv NC} with $Q_{j_0}$ and $G_{j_0}$ replaced by the new parameters $\hat{Q}_{j_0}=G_{j_0}(0)$ and $\hat{G}_{j_0}=\opD_{j_0}G_{j_0}$, as defined in the statement of the lemma.
\end{proof}

\begin{remark}
The $j_0$-th row of the new system $(\hat{Q},\hat{G}_{+-})$ can be obtained by formally applying the operator $\opD_{j_0}$ to the $(m+j_0)$-th component of the solution to the system $(Q,G_{+-})$.
More precisely, if $y$ is a solution to $(Q,G_{+-})$ and we define (formally)
$$
\hat{y}_-=y_-, \quad
\hat{y}_{m+j}
=\begin{dcases}
y_{m+j} & \text{ if } j \neq j_0, \\
\opD_{j_0} y_{m+j_0} & \text{ otherwise, }
\end{dcases}
$$
then $\hat{y}$ will be a solution to $(\hat{Q},\hat{G}_{+-})$.
Indeed, the equation and boundary condition satisfied by $y_{m+j_0}$, namely,
$$
\begin{dcases}
\pt{y_{m+j_0}}(t,x)+ (\opD_{j_0} y_{m+j_0})(t,x)=G_{j_0}(x)y_-(t,0),
\\
y_{m+j_0}(t,0)=0 \quad \text{ (since $Q_{j_0}=0$), }
\end{dcases}
$$
give the equation
\begin{align*}
\pt{(\opD_{j_0}y_{m+j_0})}(t,x)+ \opD_{j_0} (\opD_{j_0} y_{m+j_0})(t,x) &=(\opD_{j_0} G_{j_0})(x)y_-(t,0)
\\
&=\hat{G}_{j_0}(x)\hat{y}_-(t,0),
\end{align*}
and the boundary condition
\begin{align*}
(\opD_{j_0} y_{m+j_0})(t,0) &=-\pt{y_{m+j_0}}(t,0)
+G_{j_0}(0)y_-(t,0)
\\
 &=\hat{Q}_{j_0} \hat{y}_-(t,0).
\end{align*}

\end{remark}

\section{Method for the minimal control time of systems $(Q,G_{+-})$}\label{sect method}

We are now ready to present our method to determine the minimal null control time of systems of the form $(Q,G_{+-})$ and thus solve Problem \ref{prob 1}.

\subsection{General ideas}\label{sect gen ideas}

We first present the main ideas of the method.
Under a fundamental assumption, we are going to construct new systems $(Q^{[k]}, G^{[k]}_{+-})$, $k=1,2,\ldots,\nmin$, which have the same minimal null control time as the initial system $(Q,G_{+-})$, and such that the first $k$ rows of $Q^{[k]}$ are linearly independent.
This construction is as follows.

For convenience, we denote by $Q^{[0]}=Q$ and $G^{[0]}_{+-}=G_{+-}$.
Assume that $(Q^{[k-1]}, G^{[k-1]}_{+-})$ has been constructed with the desired properties for some $k \in \ens{1,\ldots,\nmin}$, and let us show how to construct $(Q^{[k]}, G^{[k]}_{+-})$.
We discuss two cases.

\begin{itemize}
\item\label{case nonzero}
If $Q^{[k-1]}_k$ and the first $k-1$ rows of $Q^{[k-1]}$ are linearly independent (resp. $Q^{[0]}_1 \neq 0$ for $k=1$), then we do nothing and set $Q^{[k]}=Q^{[k-1]}$, $G^{[k]}_{+-}=G^{[k-1]}_{+-}$.

\item
Otherwise, we construct a new system $(\hat{Q}^{[k-1]}, \hat{G}^{[k-1]}_{+-})$ in two steps as follows.

\begin{itemize}
\item
\textbf{Step 1: we put the $k$-th row to zero.}
If $k=1$, there is nothing to do and we set $\tilde{Q}^{[k-1]}=Q^{[k-1]}$, $\tilde{G}^{[k-1]}_{+-}=G^{[k-1]}_{+-}$.
Otherwise, this means that there exists a row vector $a \in \R^{1 \times (k-1)}$ such that
$$Q^{[k-1]}_k + a Q^{[k-1]}_{1:k-1}=0,$$
where $Q^{[k-1]}_{1:k-1}$ denotes the submatrix formed by the first $k-1$ rows of $Q^{[k-1]}$.
We then form the lower triangular matrix $L$ whose $k$-th row is $\begin{pmatrix} a & 1 & 0 & \cdots & 0 \end{pmatrix}$ and whose $j$-th row is $e_j^\tr$ for $j \neq k$ (here and in what follows, $e_j^\tr$ denotes the transpose of the $j$-th vector of the canonical basis of $\R^d$, with appropriate $d$).
We apply Lemma \ref{lem reduc Q} with such a $L$ to obtain a new system $(\tilde{Q}^{[k-1]}, \tilde{G}^{[k-1]}_{+-})$, which has the same minimal null control time as $(Q^{[k-1]},G^{[k-1]}_{+-})$, and such that the $k$-th row of $\tilde{Q}^{[k-1]}$ is now identically zero.

\item
\textbf{Step 2: we take the derivative of the $k$-th row.}
Since $\tilde{Q}^{[k-1]}_k=0$, we can apply Lemma \ref{lem derivative} to produce a new system $(\hat{Q}^{[k-1]}, \hat{G}^{[k-1]}_{+-})$ with the same minimal null control time as $(\tilde{Q}^{[k-1]}, \tilde{G}^{[k-1]}_{+-})$ (and thus as $(Q^{[k-1]}, G^{[k-1]}_{+-})$).
\end{itemize}

Then, we repeat the previous discussion with the new system $(\hat{Q}^{[k-1]}, \hat{G}^{[k-1]}_{+-})$ instead of $(Q^{[k-1]}, G^{[k-1]}_{+-})$.
Now comes our main assumption: we will assume that, after a finite number of iterations, we eventually end up in the first case, that is the $k$-th row of the new boundary coupling matrix and the first $k-1$ rows of $Q^{[k-1]}$ are linearly independent.

\end{itemize}

Applying the previous algorithm for each row of the system, we obtain in the end a new system $(Q^{[\nmin]}, G^{[\nmin]}_{+-})$ which has the same minimal null control time as the initial system $(Q,G_{+-})$, and such that the first $\nmin$ rows of $Q^{[\nmin]}$ are linearly independent.
Then, we conclude with Theorem \ref{thm comp time} that
$$\Tinf=\max_{1 \leq k \leq \nmin} \ens{T_{m+k}+T_{c_k}, \quad T_m},$$
where the indices $c_k$ refer to $Q^{[\nmin]}$.

\begin{remark}
We can always stop the algorithm before, at some row $k_0<\nmin$, but then we obtain only an upper bound for $\Tinf$.
Nevertheless, this still shows that the system $(Q,G_{+-})$ is null controllable in any time
$$
T>\max_{1 \leq k \leq k_0} \ens{T_{m+k}+T_{c_k}, \quad T_{m+k_0+1}+T_m},
$$
where the indices $c_k$ refer to $Q^{[k_0]}$.
This follows by replacing the use of Theorem \ref{thm comp time} by the upper bound \eqref{max Tinf}, which is also valid for systems of the form $(Q,G_{+-})$ (see \cite[Section 8.2]{HO22-JDE} for details).
\end{remark}

\begin{remark}
It has also been proved in \cite[Proposition 4.1]{HO22-JDE} that, for any invertible upper triangular matrix $U \in \R^{m \times m}$, the system $(Q,G_{+-})$ is equivalent to the system $(QU,G_{+-}U)$.
Using this property and Lemma \ref{lem reduc Q} we could, at the end of the construction of each system $(Q^{[k]},G^{[k]}_{+-})$, put the first $k$ rows of $Q^{[k]}$ in canonical form (Definition \ref{def can form}), up to changing $G^{[k]}_{+-}$.
\end{remark}

\subsection{Main result for systems $(Q,G_{+-})$}\label{sect main thm G}

Let us now precisely state the assumptions and conclusions of the method for a given row.
Below, for a matrix (or matrix-valued function) $A$ we denote by $A_{1:k}$ the submatrix formed by its first $k$ rows.

The main result of this section is the following.

\begin{theorem}\label{main thm G}
Consider a system $(Q,G_{+-})$ with $G_{+-} \in C^{s+1}$ ($s \in \N$).

\begin{itemize}
\item
Assume that
$$\gamma^l=0, \quad \forall l \in \ens{0,\ldots,s},$$
where
$$
\gamma^0=Q_1, \quad \gamma^l=G_1^{(l-1)}(0) \quad \text{ for $l \geq 1$}.
$$
Then, $\Tinf(Q,G_{+-})=\Tinf(\bar{Q},\bar{G}_{+-})$, where the system $(\bar{Q},\bar{G}_{+-})$ is defined by the same $j$ row as $(Q,G_{+-})$ for $j \neq 1$, and whose first row is
$$
\bar{Q}_1=
\lambda_{m+1}(0)^s G_1^{(s)}(0),
\quad
\bar{G}_1(x)=
(\opD_1^{s+1} G_1)(x).
$$

\item
Let $k \geq 2$ be fixed.
Assume that there exist some row vectors $a^0,\ldots,a^s \in \R^{1 \times (k-1)}$ such that
$$
\omega^l
+a^l Q_{1:k-1}=0, \quad \forall l \in \ens{0,\ldots,s},
$$
where
\begin{equation}\label{def omega}
\omega^0=Q_k, \quad
\omega^l=\bar{\omega}^l(0)
 \quad \text{ for $l \geq 1$},
\end{equation}
and
\begin{equation}\label{def omegabar}
\bar{\omega}^l(x)=
(\opD_k^{l-1} G_k)(x)
+\sum_{i=0}^{l-1} a^i (\opD_k^{l-1-i} G^{\zeta}_{1:k-1})(x).
\end{equation}
Then, $\Tinf(Q,G_{+-})=\Tinf(\bar{Q},\bar{G}_{+-})$, where the system $(\bar{Q},\bar{G}_{+-})$ is defined by the same $j$ row as $(Q,G_{+-})$ for $j \neq k$, and whose $k$-th row is
$$
\bar{Q}_k=\omega^{s+1},
\quad
\bar{G}_k(x)=(\opD_k \bar{\omega}^{s+1})(x).
$$

\end{itemize}

\end{theorem}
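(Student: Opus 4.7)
The plan is to apply the construction outlined in Section \ref{sect gen ideas} to row $k$ not once but exactly $s+1$ successive times, identifying the updated row at each step with the quantities $\omega^l$ and $\bar\omega^l$ introduced in \eqref{def omega}--\eqref{def omegabar}. Every elementary step preserves $\Tinf$ by Lemma \ref{lem reduc Q} or Lemma \ref{lem derivative}, so $\Tinf(Q,G_{+-})=\Tinf(\bar Q,\bar G_{+-})$ follows as soon as the induction is complete. The regularity $G_{+-}\in C^{s+1}$ is used precisely to allow $s+1$ consecutive applications of Lemma \ref{lem derivative}.

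For $k=1$ there is nothing to reduce, so each iteration is a single application of Lemma \ref{lem derivative}. A straightforward induction on $t\in\ens{0,\ldots,s+1}$ shows that after $t$ iterations the first row of the boundary coupling matrix is $(\opD_1^{t-1}G_1)(0)$ (with the convention that this equals $Q_1$ when $t=0$) and the first row of the source is $\opD_1^{t}G_1$. Expanding $\opD_1^l = \lambda_{m+1}^l\,d^l/dx^l + \text{(lower-order derivatives)}$ and evaluating at $x=0$, the hypothesis $\gamma^l=0$ for $l\leq s$ ensures successively that each of the $s+1$ applications is valid and that all lower-order contributions to $(\opD_1^{s}G_1)(0)$ vanish, producing exactly $\bar Q_1 = \lambda_{m+1}(0)^s G_1^{(s)}(0)$ and $\bar G_1 = \opD_1^{s+1} G_1$.

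For $k\geq 2$ the $l$-th iteration consists of two sub-steps: first apply Lemma \ref{lem reduc Q} with the lower triangular matrix $L$ whose $k$-th row is $\begin{pmatrix} a^{l-1} & 1 & 0 & \cdots & 0 \end{pmatrix}$ and whose remaining rows form the identity, then apply Lemma \ref{lem derivative} to the now-zero $k$-th row of $Q$. Crucially, both operations leave rows $1,\ldots,k-1$ of $Q$ and $G_{+-}$ untouched, so $Q_{1:k-1}$ and $G^\zeta_{1:k-1}$ retain their original meaning throughout the whole process. The induction hypothesis is that, after $l$ complete iterations, the $k$-th row of $Q$ equals $\omega^l$ and the $k$-th row of $G_{+-}$ equals $\opD_k\bar\omega^l$. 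The base case $l=1$ is a direct computation from $\omega^0=Q_k$ and $\bar\omega^1=G_k+a^0 G^\zeta_{1:k-1}$. For the inductive step, the hypothesis $\omega^l+a^l Q_{1:k-1}=0$ lets Lemma \ref{lem reduc Q} bring the $k$-th row of $Q$ to zero and transform the $k$-th row of $G_{+-}$ into $\opD_k\bar\omega^l+a^l G^\zeta_{1:k-1}$; Lemma \ref{lem derivative} then produces $(\omega^{l+1},\opD_k\bar\omega^{l+1})$, completing the step.

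The only point requiring real verification, and hence the main obstacle, is the telescoping identity
\begin{equation*}
\opD_k\bar\omega^l(x) + a^l G^\zeta_{1:k-1}(x) = \bar\omega^{l+1}(x),
\end{equation*}
needed to close the inductive step. It follows by a one-line computation from \eqref{def omegabar}: applying $\opD_k$ to $\bar\omega^l$ raises the exponent of $\opD_k$ by one uniformly in every summand while leaving the upper index of the sum at $l-1$, and the term $a^l G^\zeta_{1:k-1}$ contributed by Lemma \ref{lem reduc Q} supplies precisely the missing $i=l$ summand needed to reconstitute $\bar\omega^{l+1}$. Setting $l=s+1$ at the end of the induction then yields $\bar Q_k=\omega^{s+1}$ and $\bar G_k=\opD_k\bar\omega^{s+1}$, as claimed.
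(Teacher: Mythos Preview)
Your proof is correct and follows essentially the same approach as the paper: both arguments perform an induction that alternates between Lemma \ref{lem reduc Q} (to zero out the $k$-th row of $Q$) and Lemma \ref{lem derivative} (to differentiate), with the paper inducting on the parameter $s$ and you inducting on the number $l$ of completed iterations. The telescoping identity $\opD_k\bar\omega^l + a^l G^\zeta_{1:k-1} = \bar\omega^{l+1}$ that you highlight is exactly what the paper uses implicitly when it writes $\bar G_k + a^{s+1} G^\zeta_{1:k-1} = \bar\omega^{s+2}$ in its inductive step, and your observation that rows $1,\ldots,k-1$ are never modified (so that $Q_{1:k-1}$ and $G^\zeta_{1:k-1}$ keep their original meaning) matches the paper's bookkeeping.
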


\begin{remark}\label{rem all dep on G at x=0}
To apply or iterate this result we only need to know the values of $G_{+-}$ and its derivatives at $x=0$.
\end{remark}


\begin{proof}[Proof of Theorem \ref{main thm G}]
We prove the result by induction on $s$.
\begin{itemize}
\item
Let us prove the first point.
For $s=0$ the result is obtained from Lemma \ref{lem derivative} since $\gamma^0=Q_1=0$.
Assume now that the result holds for $s \geq 0$ and let us prove it for $s+1$.
Using the induction assumption, we have $\Tinf(Q,G_{+-})=\Tinf(\bar{Q},\bar{G}_{+-})$, where $(\bar{Q},\bar{G}_{+-})$ is the system defined in the statement of the theorem.
Since in addition $\gamma^{s+1}=0$, we have $\bar{Q}_1=0$.
We then apply Lemma \ref{lem derivative} to obtain a new system, which has the same minimal null control time as $(\bar{Q},\bar{G}_{+-})$ and whose first row is
$$
\bar{G}_1(0)=(\opD_1^{s+1}G_1)(0), \quad \opD_1 \bar{G}_1=\opD_1^{s+2} G_1,
$$
(the other rows are unchanged).
Finally, we note that
$$(\opD_1^{s+1} G_1)(0) =\lambda_{m+1}(0)^{s+1} G_1^{(s+1)}(0),$$
since $G_1^{(l)}(0)=\gamma^{l+1}=0$ for every $l \leq s$.
This proves the induction.

\item
Let us prove the second point.
Let then $k \geq 2$ be fixed.
We first prove the property for $s=0$.
We apply Lemma \ref{lem reduc Q}, with $L$ defined by $L_k=\begin{pmatrix} a^0 & 1 & 0 & \cdots & 0 \end{pmatrix}$ and $L_j=e_j^\tr$ for $j \neq k$, to obtain a new system, which has the same minimal null control time as $(Q,G_{+-})$, whose first $k$ rows are
$$
\begin{pmatrix}
Q_{1:k-1} \\
Q_k +a^0 Q_{1:k-1}
\end{pmatrix}=
\begin{pmatrix}
Q_{1:k-1} \\
0
\end{pmatrix}
, \quad
\begin{pmatrix}
G_{1:k-1} \\
G_k +a^0 G^{\zeta}_{1:k-1}
\end{pmatrix}
=
\begin{pmatrix}
G_{1:k-1} \\
\bar{\omega}^1
\end{pmatrix},
$$
and with other rows unchanged.
To conclude we simply apply Lemma \ref{lem derivative}.
Assume now that the result holds for $s \geq 0$ and let us prove it for $s+1$.
Using the induction assumption, we have $\Tinf(Q,G_{+-})=\Tinf(\bar{Q},\bar{G}_{+-})$, where $(\bar{Q},\bar{G}_{+-})$ is the system defined in the statement of the theorem.
Since in addition $\omega^{s+1}
+a^{s+1} Q_{1:k-1}=0$, we have $\bar{Q}_k+a^{s+1} Q_{1:k-1}=0$.
We then apply Lemma \ref{lem reduc Q} to obtain a new system, which has the same minimal null control time as $(\bar{Q},\bar{G}_{+-})$, whose first $k$ rows are
$$
\begin{pmatrix}
Q_{1:k-1} \\
0
\end{pmatrix}
, \quad
\begin{pmatrix}
G_{1:k-1} \\
\bar{G}_k +a^{s+1} G^{\zeta}_{1:k-1}
\end{pmatrix}
=
\begin{pmatrix}
G_{1:k-1} \\
\bar{\omega}^{s+2}
\end{pmatrix},
$$
and with other rows unchanged.
To conclude we simply apply Lemma \ref{lem derivative}.

\end{itemize}

\end{proof}

\section{The derivatives of the kernel at the origin}\label{sect derivatives GM}

In Section \ref{sect method} we have introduced  a method to determine the minimal null control time of systems of the form $(Q,G_{+-})$.
We now explain how this method can also be used to determine the minimal null control time of the initial system \eqref{syst} (see Problem \ref{prob 2}).
The key observation was presented in Remark \ref{rem all dep on G at x=0}: to apply or iterate the method on systems of the form $(Q,G_{+-})$, we only need to know the values of $G_{+-}$ and its derivatives at $x=0$.
We show here that these derivatives can be explicitly computed from the knowledge of values of $M$ and its derivatives at $x=0$.
First of all, it is clear that (recall \eqref{def G})
\begin{equation}\label{deriv G}
G_j^{(N)}(0)=
\frac{\partial^N K_{m+j}}{\partial x^N}(0,0) B,
\end{equation}
for every $j$ and $N \leq r+1$.
It remains to compute the derivatives of the kernel $K$ at the origin.

Below, the $(\alpha,\beta)$ entry of $K$ is denoted by $k_{\alpha\beta}$ and the $\alpha$-th row of $K$ is denoted by $K_\alpha$.
Finally, for $N \in \N$ and a smooth enough function (or matrix-valued function) $h$ defined on $\clos{\Tau}$, we denote by
$$
\def\arraystretch{1.5}
D^N h=
\begin{pmatrix}
\frac{\partial^N h}{\partial x^N} \\
\frac{\partial^N h}{\partial x^{N-1} \partial \xi} \\
\vdots \\
\frac{\partial^N h}{\partial \xi^N} \\
\end{pmatrix}.
$$

The main result of this section is the following.

\begin{theorem}\label{thm derivatives K}
For any solution $K \in C^{r+1}(\clos{\Tau})^{n \times n}$ to the kernel equations \eqref{kern equ}, we have
\begin{equation}\label{K00}
k_{\alpha\beta}(0,0)=
\frac{m^0_{\alpha \beta}(0)}{\lambda_{\alpha}(0)-\lambda_{\beta}(0)} \quad (\beta \neq \alpha),
\end{equation}
and
$$
J^{(N)}_{\alpha\beta}
(D^N k_{\alpha\beta})(0,0)
=\Phi_{\alpha\beta}^{(N)}(K_\alpha(0,0),\ldots,(D^{N-1}K_\alpha)(0,0)),
$$
for every $1 \leq \alpha,\beta \leq n$ and $1 \leq N \leq r+1$, where $J^{(N)}_{\alpha\beta} \in \R^{(N+1) \times (N+1)}$ and $\Phi_{\alpha\beta}^{(N)}:\R^{1 \times n} \times \cdots \times \R^{N \times n} \to \R^{N+1}$ are explicit (see below) and $J^{(N)}_{\alpha\beta}$ is invertible.
\end{theorem}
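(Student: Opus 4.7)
The plan is to derive the required relations directly from the kernel equations \eqref{kern equ} and then verify invertibility of the resulting coefficient matrix separately. For $N = 0$, the $(\alpha, \beta)$-entry of the second equation in \eqref{kern equ} reads $(\lambda_\alpha(x) - \lambda_\beta(x)) k_{\alpha\beta}(x, x) = m^0_{\alpha\beta}(x)$; for $\alpha \neq \beta$ the strict ordering \eqref{hyp speeds} allows division, and evaluation at $x = 0$ gives \eqref{K00}.

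For $N \geq 1$, I combine two sources of information. The $(\alpha, \beta)$-entry of the PDE in \eqref{kern equ} is
$$\lambda_\alpha(x) \px{k_{\alpha\beta}} + \lambda_\beta(\xi) \pxi{k_{\alpha\beta}} + k_{\alpha\beta} \lambda'_\beta(\xi) + \sum_\gamma k_{\alpha\gamma} m^0_{\gamma\beta}(\xi) = 0.$$
Applying $\partial_x^i \partial_\xi^{N-1-i}$ for $i = 0, \ldots, N - 1$ and evaluating at $(0, 0)$ produces $N$ scalar relations of the form
$$\lambda_\alpha(0) (D^{i+1, N-1-i} k_{\alpha\beta})(0, 0) + \lambda_\beta(0) (D^{i, N-i} k_{\alpha\beta})(0, 0) = R_i,$$
where $R_i$ is an explicit polynomial expression in derivatives of $\Lambda, M^0$ at $0$ and in derivatives of the row $K_\alpha$ at $(0, 0)$ of order at most $N - 1$. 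Differentiating the diagonal boundary relation $N$ times by the Leibniz rule yields the one additional equation
$$\sum_{i=0}^{N} \binom{N}{i} (D^{N-i, i} k_{\alpha\beta})(0, 0) = \left.\frac{d^N}{dx^N}\!\left(\frac{m^0_{\alpha\beta}(x)}{\lambda_\alpha(x) - \lambda_\beta(x)}\right)\right|_{x = 0},$$
which is well defined for $\alpha \neq \beta$. Stacking these $N + 1$ equations in the $N + 1$ unknowns $(D^N k_{\alpha\beta})(0, 0)$ produces the claimed system with
$$J^{(N)}_{\alpha\beta} = \begin{pmatrix}
\lambda_\alpha(0) & \lambda_\beta(0) & 0 & \cdots & 0 \\
0 & \lambda_\alpha(0) & \lambda_\beta(0) & \ddots & \vdots \\
\vdots & \ddots & \ddots & \ddots & 0 \\
0 & \cdots & 0 & \lambda_\alpha(0) & \lambda_\beta(0) \\
\binom{N}{0} & \binom{N}{1} & \cdots & \binom{N}{N-1} & \binom{N}{N}
\end{pmatrix}.$$

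Invertibility of $J^{(N)}_{\alpha\beta}$ when $\alpha \neq \beta$ reduces to a Vandermonde-type computation: any $v = (v_0, \ldots, v_N)^\tr$ in its kernel must satisfy $v_{k+1} = -(\lambda_\alpha(0)/\lambda_\beta(0)) v_k$ from the first $N$ rows (valid since $\lambda_\beta(0) \neq 0$ by hyperbolicity), hence $v_k = (-\lambda_\alpha(0)/\lambda_\beta(0))^k v_0$; substituting into the last row forces $v_0 \, (1 - \lambda_\alpha(0)/\lambda_\beta(0))^N = 0$, so that $v_0 = 0$ whenever $\lambda_\alpha(0) \neq \lambda_\beta(0)$.

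The main obstacle I foresee is the bookkeeping of the right-hand sides $R_i$, and ultimately of $\Phi^{(N)}_{\alpha\beta}$: differentiating the PDE $N - 1$ times produces a sizeable Leibniz expansion involving derivatives of $\Lambda, \Lambda', M^0$ at $0$ multiplied by lower-order derivatives of $K_\alpha$. The qualitative point that makes the statement work, and that must be verified carefully, is that the only coupling between entries of $K$ in the PDE is through the product $\sum_\gamma k_{\alpha\gamma} m^0_{\gamma\beta}$, which mixes columns but not rows; consequently $\Phi^{(N)}_{\alpha\beta}$ depends only on the data of $\Lambda, M^0$ at $0$ and on the derivatives $(D^\ell K_\alpha)(0, 0)$ of the single row $K_\alpha$ for $\ell \leq N - 1$, exactly as the statement requires.
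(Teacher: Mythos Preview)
Your approach is essentially the same as the paper's for $\beta \neq \alpha$: differentiate the PDE $N-1$ times, differentiate the diagonal boundary relation $N$ times, stack, and check invertibility (the paper computes the inverse explicitly via Proposition~\ref{comp inv J}, while you argue the kernel is trivial --- these are equivalent).

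However, there is a genuine gap: the theorem is stated \emph{for every $1\le\alpha,\beta\le n$}, and your argument covers only $\beta\neq\alpha$. Your $(N{+}1)$-th equation comes from the diagonal relation $k_{\alpha\beta}(x,x)=m^0_{\alpha\beta}(x)/(\lambda_\alpha(x)-\lambda_\beta(x))$, which is meaningless when $\beta=\alpha$; and your invertibility argument collapses precisely when $\lambda_\alpha(0)=\lambda_\beta(0)$. The paper handles $\beta=\alpha$ by using instead the trace condition $k_{\alpha\alpha}(x,0)=f_{\alpha\alpha}(x)$ (with $f_{\alpha\alpha}=k_{\alpha\alpha}(\cdot,0)$ treated as given data, cf.\ Remark~\ref{rem uniqueness of values}). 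Differentiating this $N$ times in $x$ and evaluating at $x=0$ gives $\partial_x^N k_{\alpha\alpha}(0,0)=f_{\alpha\alpha}^{(N)}(0)$, so the last row of $J^{(N)}_{\alpha\alpha}$ becomes $(1,0,\ldots,0)$ rather than the binomial row. The resulting matrix is upper triangular after a row permutation and manifestly invertible since $\lambda_\alpha(0)\neq 0$. You need to add this case; without it the recursion cannot even be started, since $\Phi^{(N)}_{\alpha\beta}$ depends on the \emph{full} row $K_\alpha$ at lower order, including its diagonal entry.
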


Let us prove the result.
We fix a row index $\alpha \in \ens{1,\ldots,n}$.
Let us denote by
$$
f_{\alpha\alpha}(x)=k_{\alpha\alpha}(x,0).
$$
Then, the kernel equations can be written as
\begin{equation}\label{kern equ bis}
\begin{dcases}
\lambda_{\alpha}(x)\px{k_{\alpha\beta}}(x,\xi)
+\pxi{k_{\alpha\beta}}(x,\xi)\lambda_{\beta}(\xi)
=-K_\alpha(x,\xi) A_{\beta}(\xi),
\\
k_{\alpha\beta}(x,x)=f_{\alpha\beta}(x) \quad (\beta \neq \alpha), \quad k_{\alpha\alpha}(x,0)=f_{\alpha\alpha}(x),
\end{dcases}
\end{equation}
for any $1 \leq \beta \leq n$, where $A_\beta$ denotes the $\beta$-th column of
$$
A=\Lambda'+M^0,
$$
and
$$
f_{\alpha\beta}(x)=
\frac{m^0_{\alpha \beta}(x)}{\lambda_{\alpha}(x)-\lambda_{\beta}(x)} \quad (\beta \neq \alpha).
$$

Let $1 \leq N \leq r+1$ and $1 \leq l \leq N$.
Taking the derivative of the first equation in \eqref{kern equ bis}, $N-l$ times with respect to $x$ and $l-1$ times with respect to $\xi$, we obtain
\begin{multline*}
\lambda_\alpha(x)\frac{\partial^N k_{\alpha\beta}}{\partial x^{N-l+1} \partial \xi^{l-1}}(x,\xi)
+\frac{\partial^N k_{\alpha\beta}}{\partial x^{N-l} \partial \xi^l}(x,\xi) \lambda_\beta(\xi)
\\
\begin{aligned}
&=
-\sum_{\sigma=0}^{l-1} \binom{l-1}{\sigma} \frac{\partial^{N-1-\sigma} K_\alpha}{\partial x^{N-l} \partial \xi^{l-1 -\sigma}}(x,\xi) A_{\beta}^{(\sigma)}(\xi)
\\
& \phantom{=} -\sum_{\sigma=1}^{N-l} \binom{N-l}{\sigma} \lambda_\alpha^{(\sigma)}(x) \frac{\partial^ {N-\sigma} k_{\alpha\beta}}{\partial x^{N-l-\sigma+1} \partial \xi^{l-1}}(x,\xi)
\\
& \phantom{=} -\sum_{\sigma=1}^{l-1} \binom{l-1}{\sigma} \frac{\partial^ {N-\sigma} k_{\alpha\beta}}{\partial x^{N-l} \partial \xi^{l-\sigma}}(x,\xi) \lambda_\beta^{(\sigma)}(\xi).
\end{aligned}
\end{multline*}
On the other hand, taking $N$ times the derivative of the equations of the second line of \eqref{kern equ bis}, we obtain
$$
\sum_{l=0}^{N} \binom{N}{l} \frac{\partial^N k_{\alpha\beta}}{\partial x^{N-l} \partial \xi^l}(x,x)
=f_{\alpha\beta}^{(N)}(x) \quad (\beta \neq \alpha),
\quad \frac{\partial^N k_{\alpha\alpha}}{\partial x^N}(x,0)=f_{\alpha\alpha}^{(N)}(x).
$$
At the origin $(x,\xi)=(0,0)$, the previous computations give the system
$$
J^{(N)}_{\alpha\beta}
\def\arraystretch{1.5}
\begin{pmatrix}
\frac{\partial^N k_{\alpha\beta}}{\partial x^N}(0,0) \\
\frac{\partial^N k_{\alpha\beta}}{\partial x^{N-1} \partial \xi}(0,0) \\
\vdots \\
\frac{\partial^N k_{\alpha\beta}}{\partial \xi^N}(0,0) \\
\end{pmatrix}
=\Phi_{\alpha\beta}^{(N)},
$$
where $J^{(N)}_{\alpha\beta}$ is the $(N+1) \times (N+1)$ matrix defined by
\begin{equation}\label{def J matrix}
J^{(N)}_{\alpha\beta}=
\begin{psmallmatrix}
\lambda_\alpha(0) & \lambda_\beta(0) & 0 & \cdots & 0\\
0 & \lambda_\alpha(0) & \lambda_\beta(0) & \ddots & \vdots \\
\vdots & \ddots & \ddots & \ddots & 0 \\
0 & \cdots & 0 & \lambda_\alpha(0) & \lambda_\beta(0) \\
\binom{N}{0} & \binom{N}{1} & \cdots & \binom{N}{N-1} & \binom{N}{N}
\end{psmallmatrix}
\quad (\beta \neq \alpha),
\quad
J^{(N)}_{\alpha\alpha}=
\begin{psmallmatrix}
\lambda_\alpha(0) & \lambda_\alpha(0) & 0 & \cdots & 0\\
0 & \lambda_\alpha(0) & \lambda_\alpha(0) & \ddots & \vdots \\
\vdots & \ddots & \ddots & \ddots & 0 \\
0 & \cdots & 0 & \lambda_\alpha(0) & \lambda_\alpha(0) \\
1 & 0 & \cdots & \cdots & 0
\end{psmallmatrix},
\end{equation}
and $\Phi_{\alpha\beta}^{(N)}$ is the $\beta$-th column of the $(N+1) \times n$ matrix
$$
\Phi_\alpha^{(N)}=
\begin{pmatrix}
\bar{\Phi}_\alpha^{(N)}
\\
F_\alpha^{(N)}(0)
\end{pmatrix},
$$
where
$$
F_\alpha=\begin{pmatrix} f_{\alpha 1} & f_{\alpha 2} & \cdots & f_{\alpha n} \end{pmatrix},
$$
and
\begin{equation}\label{def Hbar}
\begin{aligned}
\bar{\Phi}_\alpha^{(N)} &=
-\sum_{\sigma=0}^{N-1} R^{(N)}_\sigma (D^{N-1-\sigma}K_\alpha)(0,0) A^{(\sigma)}(0)
\\
& \phantom{=} -\sum_{\sigma=1}^{N-1} \lambda_\alpha^{(\sigma)}(0) \check{S}^{(N)}_\sigma (D^{N-\sigma} K_{\alpha})(0,0)
\\
& \phantom{=} -\sum_{\sigma=1}^{N-1} S^{(N)}_\sigma (D^{N-\sigma} K_{\alpha})(0,0) \Lambda^{(\sigma)}(0),
\end{aligned}
\end{equation}
in which $R^{(N)}_0=E^{(N)}_0$ and
$$
R^{(N)}_\sigma=
\begin{pmatrix}
0 \\
E^{(N)}_\sigma
\end{pmatrix},
\quad
\check{S}^{(N)}_\sigma=
\begin{pmatrix}
\check{E}^{(N)}_\sigma & 0 \\
0 & 0
\end{pmatrix},
\quad
S^{(N)}_\sigma=
\begin{pmatrix}
0 & 0 \\
0 & E^{(N)}_\sigma
\end{pmatrix},
\quad (1 \leq \sigma \leq N-1),
$$
with
$$
E^{(N)}_\sigma =
\diag\left(\binom{i}{\sigma}\right)_{\sigma \leq i \leq N-1}
\quad
\check{E}^{(N)}_\sigma =
\diag\left(\binom{N-i}{\sigma}\right)_{1 \leq i \leq N-\sigma}.
$$

%
%
%
%
%
%

Finally, the matrix $J^{(N)}_{\alpha\beta}$ is invertible and its inverse is easy to compute:

\begin{proposition}\label{comp inv J}
For any $\beta$, the matrix $J^{(N)}_{\alpha\beta}$ is invertible and, denoting by $(J^{(N)}_{\alpha\beta})^{-1}_l$ the $l$-th row of the inverse of $J^{(N)}_{\alpha\beta}$, we have
$$
(J^{(N)}_{\alpha\alpha})^{-1}_1=e_{N+1}^\tr,
\quad
(J^{(N)}_{\alpha\beta})^{-1}_1=
\begin{pmatrix}
a_1 & \cdots & a_{N+1}
\end{pmatrix} \quad (\beta \neq \alpha),
$$
with
$$
a_i = -\frac{1}{\lambda_\beta(0)} a_{N+1} \sum_{j=0}^{N-i} \binom{N}{i+j} \left(-\frac{\lambda_\alpha(0)}{\lambda_\beta(0)}\right)^j, \quad 1 \leq i \leq N,
\quad a_{N+1} =\left(1-\frac{\lambda_\alpha(0)}{\lambda_\beta(0)}\right)^{-N},
$$
and then (for any $\beta$),
$$
(J^{(N)}_{\alpha\beta})^{-1}_l=\frac{1}{\lambda_\beta(0)} e_{l-1}^\tr
-\frac{\lambda_\alpha(0)}{\lambda_\beta(0)} (J^{(N)}_{\alpha\beta})^{-1}_{l-1}, \quad 2 \leq l \leq N+1.
$$
\end{proposition}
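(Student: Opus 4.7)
The plan is to verify directly that the row vectors displayed in the statement form the rows of $(J^{(N)}_{\alpha\beta})^{-1}$; doing this simultaneously establishes invertibility. Set $a = \lambda_\alpha(0)$ and $b = \lambda_\beta(0)$. The key structural observation is that, for $2 \leq l \leq N+1$, the $(l-1)$-th row of $J^{(N)}_{\alpha\beta}$ has the simple form $a e_{l-1}^\tr + b e_l^\tr$ (this formula is uniform in both cases $\beta = \alpha$ and $\beta \neq \alpha$). Denote by $C_l$ the candidate for the $l$-th row of $(J^{(N)}_{\alpha\beta})^{-1}$ given in the statement; the goal is to show $C_l J^{(N)}_{\alpha\beta} = e_l^\tr$ for every $1 \leq l \leq N+1$.

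I would treat the base case $l = 1$ by hand. When $\beta = \alpha$, the proposed first row $e_{N+1}^\tr$ merely selects the last row of $J^{(N)}_{\alpha\alpha}$, which is $e_1^\tr$ by definition of the matrix, so there is nothing to do. When $\beta \neq \alpha$, the vector equation $C_1 J^{(N)}_{\alpha\beta} = e_1^\tr$ unpacks into three families of scalar identities on $(a_1, \ldots, a_{N+1})$: a first-column equation $a a_1 + a_{N+1} = 1$; middle-column equations $a a_k + b a_{k-1} + a_{N+1} \binom{N}{k-1} = 0$ for $2 \leq k \leq N$; and a last-column equation $b a_N + a_{N+1} = 0$. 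The last is immediate from $a_N = -a_{N+1}/b$. The middle equations reduce, after an index shift in one of the sums, to a telescoping cancellation of the two nested binomial sums defining $a a_k$ and $b a_{k-1}$, leaving exactly the single isolated term $-a_{N+1} \binom{N}{k-1}$. The first-column equation reduces, after factoring out $a_{N+1}$, to the binomial identity $\sum_{i=0}^{N} \binom{N}{i} (-a/b)^i = (1 - a/b)^N$, and the choice $a_{N+1} = (1 - a/b)^{-N}$ is precisely what is needed to produce $1$ on the right-hand side. (Note that this is also the step where the strict hyperbolicity hypothesis $\lambda_\alpha(0) \neq \lambda_\beta(0)$ enters, ensuring the denominator is nonzero.)

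With the base case established, the remaining rows follow by induction on $l$ via the recursion from the statement. Assuming $C_{l-1} J^{(N)}_{\alpha\beta} = e_{l-1}^\tr$, one computes
$$
C_l J^{(N)}_{\alpha\beta} = \tfrac{1}{b} \, e_{l-1}^\tr J^{(N)}_{\alpha\beta} - \tfrac{a}{b} \, C_{l-1} J^{(N)}_{\alpha\beta} = \tfrac{1}{b}\bigl(a e_{l-1}^\tr + b e_l^\tr\bigr) - \tfrac{a}{b} e_{l-1}^\tr = e_l^\tr,
$$
using the structural observation from the first paragraph (valid because $l - 1 \leq N$ in this range). This closes the induction and exhibits the full inverse explicitly, in particular proving invertibility.

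The only piece of genuine bookkeeping is the verification of $C_1$ when $\beta \neq \alpha$, namely the cancellation of the two nested binomial sums in the middle equations together with the application of the binomial theorem in the first-column equation. These are routine identities, but they are where all the content of the closed-form expressions for the coefficients $a_i$ sits; I expect this telescoping calculation to be the main (and essentially the only) obstacle in the proof.
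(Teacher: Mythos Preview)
Your proposal is correct. The paper does not spell out a proof of this proposition---it just says the inverse ``is easy to compute'' and states the formulas---so your direct left-inverse verification (checking $C_l J^{(N)}_{\alpha\beta} = e_l^\tr$ for the base case and then propagating via the bidiagonal structure of the first $N$ rows) is exactly the intended routine computation, and all the scalar identities you isolate do reduce as you describe.
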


This concludes the proof of Theorem \ref{thm derivatives K}.

\begin{remark}\label{rem uniqueness of values}
As already mentioned above, the kernel equations \eqref{kern equ} have many $C^{r+1}$ solutions.
The same is true if we require in addition that
$$k_{\alpha\alpha}(x,0)=f_{\alpha\alpha}(x),$$
for every $1 \leq \alpha \leq n$ and $x \in [0,1]$, where $f_{\alpha\alpha}$ are now given functions in $C^{r+1}([0,1])$.
However, we see from Theorem \ref{thm derivatives K} that, despite this lack of uniqueness, once the values of the function $f_{\alpha\alpha}$ and its derivatives are known at $x=0$, then the values of $K_\alpha$ and its derivatives at $(x,\xi)=(0,0)$ are uniquely determined.
\end{remark}

\begin{remark}
If $\Lambda$ is constant, then the last two sums in $\bar{\Phi}_\alpha^{(N)}$ disappear (see \eqref{def Hbar}).
If, in addition, $M^0$ is also constant, then so are $A,F$ and we obtain the simple formula
$$
\Phi_\alpha^{(N)}=
\begin{pmatrix}
-E^{(N)}_0 (D^{N-1}K_\alpha) M^0
\\
0
\end{pmatrix}.
$$
\end{remark}

\section{Summary and applications}\label{sect app}

Our method to determine the minimal null control time of system \eqref{syst} is thus as follows.

First of all, we fix a solution $K$ to the kernel equations \eqref{kern equ}.
This solution can be arbitrary or we may require additional constraints if this helps in the computations (see Remark \ref{rem uniqueness of values}).
After that, $G$ is uniquely defined by \eqref{def G}.

Then, we do the following, successively for each $k=1,2,\ldots,\nmin$ (with $Q^{[0]}=Q$ and $G^{[0]}_{+-}=G_{+-}$):
\begin{itemize}
\item
We check if $Q^{[k-1]}_k$ and the first $k-1$ rows of $Q^{[k-1]}$ are linearly independent (resp. $Q_1 \neq 0$ for $k=1$).
If that is the case, we just set $Q^{[k]}=Q^{[k-1]}$ and $G_{+-}^{[k]}=G_{+-}^{[k-1]}$.

\item
Otherwise, we find the first integer $s$ ($\leq r$) such that the assumption of Theorem \ref{main thm G} is satisfied and such that $\omega^{s+1}$ is linearly independent with the first $k-1$ rows of $Q^{[k-1]}$ (resp. $\gamma^{s+1} \neq 0$ for $k=1$).
That such an integer exists is the main assumption of our method.
In particular, we need enough regularity on $G_{+-}$ (and thus on $\Lambda, M$).
Now, to compute $\omega^l$ for $1 \leq l \leq s+1$, we proceed as follows:

\begin{itemize}
\item
We first express $\omega^l$ in function of $G_1,\ldots,G_k$ and their derivatives at $x=0$, using its definition \eqref{def omega}-\eqref{def omegabar}, the definition of the operator $\opD_k$ (see Lemma \ref{lem derivative}) and the definition of $G^{\zeta}_{1:k-1}$ (see \eqref{def Gzeta}).

\item
To compute $G_1,\ldots,G_k$ and their derivatives at $x=0$, we use identity \eqref{deriv G} and the formula provided by Theorem \ref{thm derivatives K} (with Proposition \ref{comp inv J}).
\end{itemize}
We then define $Q^{[k]}$ and $G^{[k]}_{+-}$ as in the conclusion of Theorem \ref{main thm G}.

\end{itemize}

After repeating the previous procedure successively for each $k$ we obtain in the end a new system $(Q^{[\nmin]}, G^{[\nmin]}_{+-})$ which has the same minimal null control time as the initial system $(Q,G_{+-})$, and such that the first $\nmin$ rows of $Q^{[\nmin]}$ are linearly independent.
Then, we conclude with Theorem \ref{thm comp time} that
$$\Tinf=\max_{1 \leq k \leq \nmin} \ens{T_{m+k}+T_{c_k}, \quad T_m},$$
where the indices $c_k$ refer to $Q^{[\nmin]}$ (and are easy to compute).

Let us detail an example to help clarify this procedure.
We will then present two other applications.

\subsection{A detailed example}

Let us consider the following $3+2$ system with constant coefficients:
\begin{equation}\label{syst ex}
\Lambda=\diag\left(-2,-1,-\frac{1}{2},1,2\right)
\quad
M=\left(\begin{array}{ccc|cc}
0 & 0 & 0 & 1 & 6 \\
0 & 0 & 0 & 2 & 1 \\
0 & 0 & 0 & 3 & -1 \\
\hline
3 & 2 & 1 & 0 & 0 \\
8 & 9 & -\frac{20}{3} & 0 & 0
\end{array}\right),
\quad
Q=
\begin{pmatrix}
0 & 1 & -2 \\
0 & 2 & -4
\end{pmatrix}.
\end{equation}
We have
$$
T_1=\frac{1}{2}, \quad T_2=1, \quad T_3=2, \quad T_4=1, \quad T_5=\frac{1}{2}.
$$

The current results in the literature are insufficient to determine the minimal null control time of this system.
Indeed, the result of \cite{Rus78} gives that system \eqref{syst ex} is null controllable in time
$$T=T_4+T_3=3,$$
and the result of \cite{HO22-JDE} (see \eqref{max Tinf}) gives that system \eqref{syst ex} is null controllable in any time
$$T>\max\ens{T_4+T_2, T_5+T_3}=\frac{5}{2}.$$
Also, note that $Q \not\in \setCN$ (recall \eqref{def setCN}), so that the results of \cite{CN19,CN21} cannot be used.

With our method, we will prove that system \eqref{syst ex} is in fact null controllable in any time $T>2$.
We will also see that this system is not null controllable if $T<2$.

\begin{proposition}
The minimal null control time of system \eqref{syst ex} is $\Tinf=2$.
\end{proposition}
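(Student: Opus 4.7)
The proof is a direct application of the method summarized at the start of Section \ref{sect app} with $m=3$, $p=2$, and $\nmin=2$. Since $M$ has zero diagonal entries and $\Lambda$ is constant, the preliminary change of unknown is trivial: $D=\Id$, $M^0=M$, and $A=\Lambda'+M^0=M$. Moreover, for $k=1$ there is nothing to do, since $Q_1=(0,1,-2)\neq 0$, so we set $(Q^{[1]},G^{[1]}_{+-})=(Q,G_{+-})$.

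The real work lies in the step $k=2$. Since $Q_2=2Q_1$, I would choose $a^0=-2$ so that $\omega^0+a^0Q_1=0$. I would then use formula \eqref{K00} from Theorem \ref{thm derivatives K} (selecting $f_{\alpha\alpha}\equiv 0$ for simplicity) to obtain
$$K_4(0,0)=(1,1,\tfrac{2}{3},0,0),\qquad K_5(0,0)=(2,3,-\tfrac{8}{3},0,0),$$
and multiply by the matrix $B$ of \eqref{def G} to get $G_1(0)=(2,1,1/3)$ and $G_2(0)=(4,3,-4/3)$. This yields $\omega^1=G_2(0)+a^0G_1(0)=(0,1,-2)=Q_1$, which is again linearly dependent on $Q_1$; hence I would set $a^1=-1$ and proceed to $\omega^2$.

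The computation of $\omega^2=2G_2'(0)-2G_1'(0)-G_1(0)$ requires the first derivatives $G_j'(0)$. With $\Lambda$ and $M^0$ constant, the formula \eqref{def Hbar} for $\bar{\Phi}_\alpha^{(1)}$ collapses to $-K_\alpha(0,0)M$ and $F_\alpha^{(1)}(0)=0$, so Theorem \ref{thm derivatives K} with $N=1$ reduces to inverting the $2\times 2$ matrices $J^{(1)}_{\alpha\beta}$. A short calculation (using Proposition \ref{comp inv J}) gives $\partial_x K_4(0,0)=(0,0,0,0,19/3)$ and $\partial_x K_5(0,0)=0$, whence $G_1'(0)=(0,-76/3,152/3)$ and $G_2'(0)=0$. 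Substituting,
$$\omega^2=\left(-2,\,\tfrac{149}{3},\,-\tfrac{305}{3}\right),$$
whose first entry $-2$ is nonzero, so $\omega^2$ is linearly independent of $Q_1$. The recursion therefore terminates at $s=1$, and the method produces an equivalent system with boundary matrix
$$\bar{Q}=\begin{pmatrix} 0 & 1 & -2 \\ -2 & 149/3 & -305/3 \end{pmatrix}.$$

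Finally, I would read off the canonical form of $\bar{Q}$. Its two rows are linearly independent, and an elementary row/column reduction shows that the pivot of row~$1$ lies in column~$2$ and the pivot of row~$2$ lies in column~$1$, so $c_1=2$ and $c_2=1$. Theorem \ref{thm comp time} then gives
$$\Tinf=\max\{T_4+T_2,\ T_5+T_1,\ T_3\}=\max\{2,\ 1,\ 2\}=2,$$
as claimed. The delicate point of the argument is the computation of $\omega^2$: an error in $\partial_x K_4(0,0)$ or in the multiplication by $B$ would corrupt both the linear-independence test and the final value of $\Tinf$. The structural feature making the example interesting is precisely that $\omega^1$ turns out to coincide exactly with $Q_1$, which forces one additional derivative step before the main assumption of the method is satisfied.
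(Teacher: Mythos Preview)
Your proof is correct and follows exactly the same route as the paper's own proof: the same choice $k_{44}(\cdot,0)=k_{55}(\cdot,0)=0$, the same values of $K_4(0,0)$, $K_5(0,0)$, $\partial_x K_4(0,0)$, $\partial_x K_5(0,0)$, and the same reduction to $c_1=2$, $c_2=1$. The only cosmetic difference is that the paper writes $\omega^2=8\theta Q_1-G_1(0)$ with $\theta=19/3$ rather than expanding to the explicit vector $(-2,149/3,-305/3)$, but the two expressions coincide.
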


\begin{proof}
We apply our method.
Below, $K$ is any $C^2$ solution in $\clos{\Tau}$ to the corresponding kernel equations \eqref{kern equ} that satisfies in addition the conditions
$$k_{44}(x,0)=k_{55}(x,0)=0, \quad \forall x \in [0,1].$$

\begin{enumerate}[label={[\arabic*]}]
\item
The first row of the boundary coupling matrix is
$$
Q_1=
\begin{pmatrix}
0 & 1 & -2
\end{pmatrix}.
$$
It is nonzero, so that there is nothing to do and we move on to the second row.

\item
The second row of the boundary coupling matrix is
$$
Q_2=
\begin{pmatrix}
0 & 2 & -4
\end{pmatrix}.
$$
Clearly, $Q_2$ and $Q_1$ are linearly dependent:
$$Q_2 + a^0 Q_1=0, \quad a^0=-2.$$

\begin{enumerate}[(1)]
\item
We thus form the row vector $\omega^1$ (see \eqref{def omega}-\eqref{def omegabar}):
\begin{align*}
\omega^1
&=
G_2(0) +a^0 G^{\zeta}_{1:1}(0)
\\
&=G_2(0)- 2G^{\zeta}_{1:1}(0),
\end{align*}
where $G^{\zeta}_{1:1}(x)=G_1(x/2)$ (see \eqref{def Gzeta}).
We need to check if this new row is linearly independent with $Q_1$.
We first express it in function of $G_2(0)$ and $G_1(0)$ only:
$$\omega^1=G_2(0)-2G_1(0).$$
We now have to compute $G_2(0)$ and $G_1(0)$.
From the definition \eqref{def G} of $G$, we have
$$
G_2(0)=K_5(0,0)B,
\quad
B=
\left(\begin{array}{ccc}
2 & 0 & 0 \\
0 & 1 & 0 \\
0 & 0 & \frac{1}{2} \\
\hline
0 & -1 & 2 \\
0 & -4 & 8
\end{array}\right).
$$
Using that $k_{55}(\cdot,0)=0$, we clearly have (see e.g. \eqref{K00})
$$
K_5(0,0)=\left(\begin{array}{ccc|cc}
2 & 3 & -\frac{8}{3} & 0 & 0
\end{array}\right).
$$
Thus,
$$
G_2(0)=
\begin{pmatrix}
4 & 3 & -\frac{4}{3}
\end{pmatrix}.
$$
Similarly, using that $k_{44}(\cdot,0)=0$, we get
$$
K_4(0,0)=\left(\begin{array}{ccc|cc}
1 & 1 & \frac{2}{3} & 0 & 0
\end{array}\right),
$$
and thus
$$
G_1(0)=
\begin{pmatrix}
2 & 1 & \frac{1}{3}
\end{pmatrix}.
$$
As a result,
$$
\omega^1=
\begin{pmatrix}
0 & 1 & -2
\end{pmatrix}.
$$
This new row is again linearly dependent with $Q_1$:
$$\omega^1 + a^1 Q_1=0, \quad a^1=-1.$$
We thus have to go on.

\item
We form the row vector $\omega^2$ (see \eqref{def omega}-\eqref{def omegabar}):
\begin{align*}
\omega^2
&=(\opD_2 G_2)(0) +a^0 (\opD_2 G^{\zeta}_{1:1})(0) +a^1 G^{\zeta}_{1:1}(0)
\\
&=(\opD_2 G_2)(0) -2 (\opD_2 G^{\zeta}_{1:1})(0) -
G^{\zeta}_{1:1}(0),
\end{align*}
where $\opD_2=2 \ddx$.
We need to check if this new row is linearly independent with $Q_1$.
We first express it in function of $G_2'(0)$ and $G_1'(0), G_1(0)$ only:
$$
\omega^2
=2 G_2'(0) -2 G_1'(0) -G_1(0).
$$
We now have to compute $G_2'(0)$ and $G_1'(0)$ (recall that $G_1(0)$ was computed in the previous step).
From the definition \eqref{def G} of $G$, we have
$$
G_2'(0)=\px{K_5}(0,0)B.
$$
Using the formula of Theorem \ref{thm derivatives K} (with Proposition \ref{comp inv J}), we obtain that
$$
\px{K_5}(0,0)=0.
$$
Therefore,
$$
G_2'(0)=0.
$$
Similarly, we get
$$
\px{K_4}(0,0)=
\theta e_5^\tr,
$$
with $\theta=19/3$, and thus
$$
G_1'(0)=-4\theta Q_1.
$$
As a result,
$$
\omega^2=
8\theta Q_1 - G_1(0).
$$
Clearly, $G_1(0)$ and $Q_1$ are linearly independent and thus so are $\omega^2$ and $Q_1$.
We thus stop here.

\end{enumerate}

\end{enumerate}

In conclusion, the new boundary coupling matrix is
$$
Q^{[2]}=
\begin{pmatrix}
Q_1 \\
\omega^2
\end{pmatrix}.
$$
It is full row rank.
The minimal null control time of system \eqref{syst ex} is thus
$$\Tinf=\max \ens{T_4+T_{c_1}, \quad T_5+T_{c_2}, \quad T_3},$$
where $c_1,c_2$ refer to $Q^{[2]}$.
We easily check that the canonical form (Definition \ref{def can form}) of $Q^{[2]}$ is
$$
\begin{pmatrix}
0 & 1 & 0 \\
1 & 0 & 0
\end{pmatrix},
$$
so that $c_1=2$, $c_2=1$, and
$$\Tinf=\max\ens{T_4+T_2, \quad T_5+T_1, \quad T_3}=2.$$

\end{proof}

\subsection{No boundary coupling}

Consider system \eqref{syst} with $Q=0$.
At the moment, the only result available in the literature is the one of \cite{Rus78}, and it gives that the system is null controllable in time $T_{m+1}+T_m$.

A simple application of our method gives the following result:

\begin{proposition}
Assume that $Q=0$, $\Lambda, M \in C^1$ and
$$\rank Q'=\nmin, \quad \text{ where } Q'=\left(\frac{m_{\alpha \beta}(0)}{\lambda_{\alpha}(0)-\lambda_{\beta}(0)}\right)_{\substack{\alpha \geq m+1 \\ \beta \leq m}},$$
(we recall that $\nmin=\min\ens{p,m}$).
Then, the minimal null control time is
$$
\Tinf=\max_{1 \leq k\leq \nmin} \ens{T_{m+k}+T_{c_k}, \quad T_m},
$$
where the indices $c_k$ refer to $Q'$.
\end{proposition}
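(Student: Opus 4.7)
The plan is to apply the general method summarized at the beginning of Section~\ref{sect app} with the simplifications afforded by $Q=0$ and the limited regularity $r=0$. Because $r=0$, Theorem~\ref{main thm G} can only be invoked with $s=0$ at each row, so the proof reduces to showing that this single ``zero-th derivative'' step already produces a new row that is linearly independent of the previously reduced ones.

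I would first observe that the row-$k$ reduction prescribed by Lemma~\ref{lem reduc Q} modifies only the $k$-th rows of $Q^{[k-1]}$ and of $G^{[k-1]}_{+-}$. Hence at step $k$ we still have $Q^{[k-1]}_k = Q_k = 0$ and $G^{[k-1]}_k = G_k$. The dependency relation $Q^{[k-1]}_k + a^0 Q^{[k-1]}_{1:k-1} = 0$ is then trivially satisfied with $a^0 = 0$, and Theorem~\ref{main thm G} (with $s=0$) yields
\[
\omega^1 = G_k(0),
\]
which becomes the new $k$-th row of $Q^{[k]}$.

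Next I would compute $G_k(0)$ explicitly. From \eqref{def G}, $G_k(0) = K_{m+k}(0,0) B$ with $B = -\Lambda(0)\binom{\Id_m}{0}$ since $Q=0$; the top $m\times m$ block of $B$ equals $-\diag(\lambda_1(0),\ldots,\lambda_m(0))$ and the rest is zero. Combining this with the formula \eqref{K00} for $k_{m+k,\beta}(0,0)$ and the fact that $M^0(0)$ and $M(0)$ share the same off-diagonal entries (immediate from \eqref{def M0} since $D(0)=\Id$ and $\Lambda(0)D'(0)$ is diagonal), one obtains
\[
G_k(0) = -\,Q'_k \cdot \diag\bigl(\lambda_1(0),\ldots,\lambda_m(0)\bigr),
\]
where $Q'_k$ is the $k$-th row of $Q'$.

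Interpreting the hypothesis $\rank Q'=\nmin$ as asserting that the first $\nmin$ rows of $Q'$ are linearly independent (i.e.\ the $\rho_0$ of \eqref{max Tinf} attached to $Q'$ equals $\nmin$), the identity above shows that $G_1(0),\ldots,G_{\nmin}(0)$ are linearly independent, because right-multiplication by the invertible diagonal matrix preserves linear relations. Consequently the new row $\omega^1=G_k(0)$ produced at step $k$ is linearly independent from the previously reduced rows $G_1(0),\ldots,G_{k-1}(0)$, so the hypothesis of Theorem~\ref{main thm G} holds with $s=0$ at every iteration, and the algorithm terminates at $k=\nmin$ with
\[
Q^{[\nmin]} = -\,Q'_{1:\nmin}\cdot \diag\bigl(\lambda_1(0),\ldots,\lambda_m(0)\bigr),
\]
whose first $\nmin$ rows are linearly independent. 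Theorem~\ref{thm comp time} then gives the announced formula, and the indices $c_k$ read off from $Q^{[\nmin]}$ coincide with those read off from $Q'$ because the right-hand diagonal factor is absorbed into the invertible upper triangular matrix $U$ of the canonical form (Definition~\ref{def can form}). The main obstacle is the careful bookkeeping that the row-$k$ reduction at step $k$ truly leaves $Q_k$ and $G_k$ intact, together with verifying that the diagonal scaling on the right does not alter the canonical form positions of $Q'_{1:\nmin}$.
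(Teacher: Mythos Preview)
Your proof is correct and follows essentially the same approach as the paper's: both apply the method row by row, use $a^0=0$ (since the untreated rows of $Q$ are still zero), compute $G_k(0)=-Q'_k\,\Lambda_-(0)$ via \eqref{K00}, and conclude by Theorem~\ref{thm comp time} together with the observation that right-multiplication by the invertible diagonal $\Lambda_-(0)$ does not change the canonical form. Your explicit remarks that $M^0(0)$ and $M(0)$ agree off the diagonal, and that one must read $\rank Q'=\nmin$ as linear independence of the \emph{first} $\nmin$ rows of $Q'$, are points the paper passes over silently.
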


\begin{proof}
We apply our method.
Below, $K$ is any $C^1$ solution in $\clos{\Tau}$ to the corresponding kernel equations \eqref{kern equ}.

\begin{enumerate}[label={[\arabic*]}]
\item
The first row of the boundary coupling matrix is zero.
We thus form the row vector $\gamma^1=G_1(0)$ and check whether it is still zero or not.
From the definition \eqref{def G} of $G$, we have
$$G_1(0)=-K_{m+1,-}(0,0)\Lambda_-(0),$$
where $K_{m+1,-}=\begin{pmatrix} k_{m+1,1} & \cdots & k_{m+1,m} \end{pmatrix}$ and $\Lambda_-=\diag(\lambda_1,\ldots,\lambda_m)$.
Clearly, $K_{m+1,-}(0,0) =Q'_1$ (see e.g. \eqref{K00}).
Thus,
$$G_1(0)=-Q'_1 \Lambda_-(0).$$
This row vector is not zero by assumption on $Q'$.
We set
$$
Q^{[1]}=
\begin{pmatrix}
-Q'_1 \Lambda_-(0) \\
0 \\
\vdots \\
0
\end{pmatrix},
\quad
G^{[1]}_{+-}=
\begin{pmatrix}
\opD_1G_1 \\
G_2 \\
\vdots \\
G_p
\end{pmatrix},
$$
and we move on to the next row.

\item
The second row of the new boundary coupling matrix is zero.
It is thus trivially linearly dependent with its first row:
$$Q^{[1]}_2+a^0 Q^{[1]}_1=0, \quad a^0=0.$$
We thus form the row vector $\omega^1$ (see \eqref{def omega}-\eqref{def omegabar}):
\begin{align*}
\omega^1
&=
G^{[1]}_2(0) +a^0 G^{[1]\zeta}_{1:1}(0)
\\
&=G^{[1]}_2(0)
\\
&=G_2(0).
\end{align*}
As before, we easily check that
$$G_2(0)=-Q'_2 \Lambda_-(0).$$
Using the assumption on $Q'$, it is linearly independent with $Q^{[1]}_1$.

\end{enumerate}

Repeating this procedure for the next rows, we easily see that the new boundary coupling matrix that we obtain in the end is
$$Q^{[\nmin]}=-Q' \Lambda_-(0).$$
By assumption, it is full rank.
The minimal null control time of system \eqref{syst ex} is thus
$$
\Tinf=\max_{1 \leq k\leq \nmin} \ens{T_{m+k}+T_{c_k}, \quad T_m},
$$
where the indices $c_k$ refer to $Q^{[\nmin]}$.
Finally, it is clear that $Q^{[\nmin]}$ and $Q'$ have the same canonical form (Definition \ref{def can form}), so that the indices $c_k$ are also those of $Q'$.

\end{proof}

\subsection{Result for $m+1$ systems}\label{sect m1 syst}

Consider system \eqref{syst} with $p=1$.
In that case, the boundary coupling matrix $Q$ is just a row vector.
From \cite{HO21-JMPA}, we know that, if $Q \neq 0$, then
$$\Tinf=\max\ens{T_{m+1}+T_c, \quad T_m},$$
where $c$ is the index of the first nonzero entry of $Q$ (in \cite{CN19,CN21}, the authors showed that $\Tinf \leq \max\ens{T_{m+1}+T_1, \quad T_m}$ if the first entry of $Q$ is not zero).
On the other hand, if $Q=0$, then, as mentioned in the previous section, the only result available is the one of \cite{Rus78} and it gives that the system is null controllable in time $T_{m+1}+T_m$.

We will complete these results by showing the following:

\begin{proposition}\label{prop p=1}
Assume that $p=1$, $Q=0$ and $\Lambda, M \in C^{r+1}$ with
$$
M_{m+1,-}^{(r)}(0) \neq 0,
$$
for some integer $r \in \N$, where $M_{m+1,-}=\begin{pmatrix} m_{m+1,1} & \cdots & m_{m+1,m} \end{pmatrix}$, and let $r_0$ be the smallest such integer.
Then, the minimal null control time is
$$
\Tinf=\max\ens{T_{m+1}+T_c, \quad T_m},
$$
where $c$ is the index of the first nonzero entry of $M_{m+1,-}^{(r_0)}(0)$.

\end{proposition}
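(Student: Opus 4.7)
The plan is to apply the method summarized at the beginning of Section \ref{sect app}. Since $p=1$ we have $\nmin=1$, so only the unique row of $Q$ needs to be treated. Starting from $Q^{[0]}=Q=0$ we have $\gamma^0=Q_1=0$, and the goal is to locate the smallest integer $s$ such that $\gamma^{s+1}=G_1^{(s)}(0)\neq 0$ and to read off the column of its first nonzero entry. With $Q=0$, the matrix $B$ of \eqref{def G} reduces to $B=-\bigl(\begin{smallmatrix}\Lambda_-(0)\\0\end{smallmatrix}\bigr)$ where $\Lambda_-=\diag(\lambda_1,\ldots,\lambda_m)$, so \eqref{deriv G} simplifies to
$$
G_1^{(N)}(0)=-\frac{\partial^N K_{m+1,-}}{\partial x^N}(0,0)\,\Lambda_-(0),
$$
with $K_{m+1,-}=(k_{m+1,1},\ldots,k_{m+1,m})$. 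Invertibility of the diagonal matrix $\Lambda_-(0)$ reduces the question to identifying the smallest $N$ such that $\frac{\partial^N K_{m+1,-}}{\partial x^N}(0,0)\neq 0$ and the column $c$ of its first nonzero entry.

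The core of the argument is the following inductive claim: under the hypothesis $M_{m+1,-}^{(l)}(0)=0$ for $l<r_0$, for every $0\leq N<r_0$ \emph{all} partial derivatives of order $N$ of $K_{m+1,-}$ vanish at $(0,0)$, that is, $(D^N K_{m+1,-})(0,0)=0$. The base case $N=0$ follows from \eqref{K00} together with the identity $m^0_{m+1,\beta}(0)=m_{m+1,\beta}(0)$, which holds because $D(0)=I_n$ in \eqref{def M0}. For the induction step I would invoke Theorem \ref{thm derivatives K} and check that $\Phi^{(N)}_{m+1,\beta}=0$ for every $\beta\leq m$. In the first sum defining $\bar\Phi^{(N)}_{m+1}$ in \eqref{def Hbar}, the $\beta$-th column involves $(D^{N-1-\sigma}K_{m+1})(0,0)\,A^{(\sigma)}_\beta(0)$ for $\sigma\leq N-1<r_0$: the contribution of indices $\gamma\leq m$ is killed by the induction hypothesis, while for $\gamma=m+1$ the only relevant entry of $A^{(\sigma)}_\beta(0)$ is $(m^0_{m+1,\beta})^{(\sigma)}(0)$, which vanishes because a Leibniz expansion using $D(0)=I_n$ and $m_{m+1,\beta}^{(l)}(0)=0$ for $l<r_0$ gives $(m^0_{m+1,\beta})^{(l)}(0)=0$ for $l<r_0$. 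The two $\Lambda$-derivative sums in \eqref{def Hbar} involve $(D^{N-\sigma}k_{m+1,\beta})(0,0)$ with $\sigma\geq 1$, hence $N-\sigma<N<r_0$, and vanish by induction; the bottom entry $f^{(N)}_{m+1,\beta}(0)$ of $\Phi^{(N)}_{m+1,\beta}$ vanishes by the same Leibniz argument applied to $f_{m+1,\beta}=m^0_{m+1,\beta}/(\lambda_{m+1}-\lambda_\beta)$.

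Running the same analysis at $N=r_0$ one still obtains $\bar\Phi^{(r_0)}_{m+1,\beta}=0$, but the bottom entry now survives as $f^{(r_0)}_{m+1,\beta}(0)=m_{m+1,\beta}^{(r_0)}(0)/(\lambda_{m+1}(0)-\lambda_\beta(0))$. Extracting the first component of $(D^{r_0}k_{m+1,\beta})(0,0)$ by means of Proposition \ref{comp inv J} gives
$$
\frac{\partial^{r_0}k_{m+1,\beta}}{\partial x^{r_0}}(0,0)=\left(1-\frac{\lambda_{m+1}(0)}{\lambda_\beta(0)}\right)^{\!-r_0}\frac{m_{m+1,\beta}^{(r_0)}(0)}{\lambda_{m+1}(0)-\lambda_\beta(0)},
$$
which is a nonzero scalar multiple of $m_{m+1,\beta}^{(r_0)}(0)$ for each $\beta\leq m$. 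Hence $\partial_x^{r_0}K_{m+1,-}(0,0)$ and $M_{m+1,-}^{(r_0)}(0)$ share the same vanishing pattern, and in particular the same first nonzero column $c$. Combined with the first paragraph, this yields $\gamma^0=\cdots=\gamma^{r_0}=0$ and $\gamma^{r_0+1}\neq 0$ with first nonzero entry in column $c$, so the first bullet of Theorem \ref{main thm G} (with $s=r_0$) gives $\Tinf(Q,G_{+-})=\Tinf(\bar{Q},\bar{G}_{+-})$ for a system whose unique row $\bar{Q}_1=\lambda_{m+1}(0)^{r_0}G_1^{(r_0)}(0)$ is nonzero with first nonzero entry still in column $c$. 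Since $\nmin=1$, Theorem \ref{thm comp time} then produces $\Tinf=\max\ens{T_{m+1}+T_c,\;T_m}$.

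The principal difficulty is closing the induction of the second paragraph: one must track \emph{all} partial derivatives of $K_{m+1,-}$ at the origin (not only the pure $x$-derivatives), because Theorem \ref{thm derivatives K} couples them through the invertible matrices $J^{(N)}_{m+1,\beta}$. The algebraic observation that makes everything work is that, thanks to $D(0)=I_n$, the functions $m^0_{m+1,\beta}$ and $m_{m+1,\beta}$ share the same order of vanishing at $0$, so the cancellations propagate cleanly through the formulas \eqref{def Hbar} for $\bar\Phi$ and the definition of $f_{m+1,\beta}$.
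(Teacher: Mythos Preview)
Your proof is correct and follows essentially the same route as the paper: apply the method of Section~\ref{sect method}, show that the $\gamma^l$ vanish for $l\leq r_0$ and that $\gamma^{r_0+1}$ has first nonzero entry in column $c$, by expressing the $x$-derivatives of $K_{m+1,-}$ at the origin via Theorem~\ref{thm derivatives K}, and then conclude through Theorems~\ref{main thm G} and~\ref{thm comp time}.

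There is one small but genuine difference worth recording. The paper imposes the normalisation $k_{m+1,m+1}(x,0)=0$ (as allowed by Remark~\ref{rem uniqueness of values}) so that the full row $K_{m+1}$, including the diagonal entry, has all derivatives at the origin equal to zero below order $r_0$; this makes $\bar\Phi^{(N)}_{m+1,\beta}=0$ immediate. You instead keep the kernel arbitrary and observe that, for $\beta\leq m$, the only place where $(D^{N-1-\sigma}k_{m+1,m+1})(0,0)$ could enter $\bar\Phi^{(N)}_{m+1,\beta}$ is through the coefficient $A^{(\sigma)}_{m+1,\beta}(0)=(m^0_{m+1,\beta})^{(\sigma)}(0)$, which vanishes for $\sigma<r_0$. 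This lets your induction run over $K_{m+1,-}$ alone. Your approach is slightly more economical (no normalisation needed) and yields the explicit formula for $\partial_x^{r_0}k_{m+1,\beta}(0,0)$ via Proposition~\ref{comp inv J}, whereas the paper argues the equivalence of vanishing more structurally from the shape of $J^{(N)}_{m+1,\beta}$; both reach the same conclusion.
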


\begin{proof}
We apply our method.
Below, $K$ is any $C^{r+1}$ solution in $\clos{\Tau}$ to the corresponding kernel equations \eqref{kern equ} that satisfies in addition the condition
$$k_{m+1,m+1}(x,0)=0, \quad \forall x \in [0,1].$$

We form the row vectors $\gamma^l=G_1^{(l-1)}(0)$, $l \geq 1$, and check whether they are zero or not.
To this end, we will first prove the following property: for any $N \in \ens{0,\ldots,r}$, if
\begin{equation}\label{hyp G1 zero}
G_1^{(l)}(0)=0, \quad \forall l \leq N-1,
\end{equation}
(no assumption if $N=0$) then, for any $\beta \leq m$,
$$
g_{m+1,\beta}^{(N)}(0)=0
\quad \Longleftrightarrow \quad
m_{m+1,\beta}^{(N)}(0)=0.
$$
We prove it by induction on $N$.
We start with $N=0$.
From the definition \eqref{def G} of $G$, we have
$$G_1(0)=K_{m+1}(0,0)B,
\quad
B=
-\Lambda(0)
\begin{pmatrix}
\Id_m \\
0
\end{pmatrix}.
$$
Clearly, for any $\beta \leq m$,
$$
g_{m+1,\beta}(0)=0
\quad \Longleftrightarrow \quad
k_{m+1,\beta}(0,0)=0
\quad \Longleftrightarrow \quad
m^0_{m+1,\beta}(0)=0
\quad \Longleftrightarrow \quad
m_{m+1,\beta}(0)=0,
$$
(see \eqref{K00} and recall \eqref{def M0}).
If $r \geq 1$, fix now $N \in \ens{1,\ldots,r}$, assume that the property holds for all integers before $N-1$ and let us prove it for $N$.
From the definition \eqref{def G} of $G$, we have
$$G_1^{(N)}(0)=\frac{\partial^N K_{m+1}}{\partial x^N}(0,0)B.$$
By assumption, we have \eqref{hyp G1 zero}, and it follows from the induction assumption that $(M_{m+1}^0)^{(l)}(0)=0$ for every $l \leq N-1$ (recall that $m_{m+1,m+1}^0=0$ by definition of $M^0$).
Using that $k_{m+1,m+1}(\cdot,0)=0$, we deduce from Theorem \ref{thm derivatives K} that
$$K_{m+1}(0,0)=\ldots=(D^{N-1} K_{m+1})(0,0)=0.$$
Still by the same theorem, we also obtain that
$$
J^{(N)}_{m+1,\beta}
(D^N k_{m+1,\beta})(0,0)
=
\begin{pmatrix}
0 \\
\vdots \\
0 \\
f_{m+1,\beta}^{(N)}(0)
\end{pmatrix}.
$$
It then follows from the structure of $J^{(N)}_{m+1,\beta}$ (see \eqref{def J matrix}) that
$$
\frac{\partial^N k_{m+1,\beta}}{\partial x^N}(0,0)=0
\quad \Longleftrightarrow \quad
(D^N k_{m+1,\beta})(0,0)=0
\quad \Longleftrightarrow \quad
f_{m+1,\beta}^{(N)}(0)=0.
$$
Finally, since $M_{m+1,-}^{(s)}(0)=0$ for every $s \leq N-1$, it is clear that, for $\beta \leq m$,
$$
f_{m+1,\beta}^{(N)}(0)=0
\quad \Longleftrightarrow \quad
(m^0_{m+1,\beta})^{(N)}(0)=0
\quad \Longleftrightarrow \quad
m_{m+1,\beta}^{(N)}(0)=0.
$$
This proves the induction.

By assumption of the proposition and the property we have just established, we have $\gamma^l=0$ for every $l \in \ens{0,\ldots,r_0}$ and $\gamma^{r_0+1} \neq 0$.
As a result, the new boundary coupling matrix is
$$
Q^{[1]}=\lambda_{m+1}(0)^{r_0} G_1^{(r_0)}(0).
$$
It is a nonzero row vector.
Therefore, the minimal null control time of the system is
$$\Tinf=\max\ens{T_{m+1}+T_{c_1}, \quad T_m},$$
where $c_1$ refer to $Q^{[1]}$.
Clearly, the canonical form (Definition \ref{def can form}) of $Q^{[1]}$ is $e_c^\tr$, where $c$ is the index of the first nonzero entry of $G_1^{(r_0)}(0)$ (equivalently, of $M_{m+1,-}^{(r_0)}(0)$), so that $c_1=c$.

\end{proof}

\begin{remark}
If $\Lambda, M \in C^{\infty}$ and $M_{m+1,-}$ is analytic in $(-\epsilon,1)$ for some $\epsilon>0$, then either the assumption of Proposition \ref{prop p=1} holds or we have $M_{m+1,-}=0$.
In that second situation it is not difficult to directly check that
$$\Tinf=\max\ens{T_m, T_{m+1}}.$$
Indeed, in that case the last component of the system is not coupled with the other ones and it can thus be considered as a source term for the reduced system formed from the first $m$ equations.
Since this reduced system is exactly controllable in time $T_m$ without source term, the same remains true with a source term.
The problem of finding $\Tinf$ is thus completely solved in the analytic framework for $p=1$.
\end{remark}

\section*{Acknowledgments}

The first author would like to thank the Institute of Mathematics of the Jagiellonian University for its hospitality.
This work was initiated while he was visiting there.
This project was supported by National Natural Science Foundation of China (Nos. 12471421 and 12122110) and National Science Centre, Poland UMO-2023/50/E/ST1/00081 and UMO-2020/39/D/ST1/01136.
For the purpose of Open Access, the authors have applied a CC-BY public copyright licence to any Author Accepted Manuscript (AAM) version arising from this submission.


\bibliographystyle{amsalpha}
\bibliography{biblio}

\end{document}